\documentclass[12pt]{amsart}
\usepackage{amssymb}
\usepackage{amsmath, amscd}
\usepackage{amsthm}
\usepackage{ulem}

\newtheorem{theorem}{Theorem}[section]

\newtheorem{proposition}[theorem]{Proposition}
\newtheorem{lemma}[theorem]{Lemma}

\newtheorem{corollary}[theorem]{Corollary}
\theoremstyle{definition}

\newtheorem{example}[theorem]{Example}

\def\val#1{\vert #1 \vert}
\def\ov#1{\overline{#1}}

\def\Z{{\mathbb Z}}
\def\Q{{\mathbb Q}}
\def\N{{\mathbb N}}

\def\B{{\mathcal B}}

\usepackage[utf8]{inputenc}
\usepackage{amssymb}
\usepackage{amsmath, amscd}
\usepackage{amsthm}

\date\today

\begin{document}

\author[P.V. Danchev]{Peter V. Danchev}
\address{Institute of Mathematics and Informatics, Bulgarian Academy of Sciences \\ "Acad. G. Bonchev" str., bl. 8, 1113 Sofia, Bulgaria.}
\email{danchev@math.bas.bg; pvdanchev@yahoo.com}
\author[P.W. Keef]{Patrick W. Keef}
\address{Department of Mathematics, Whitman College, Walla Walla, WA 99362, USA}
\email{keef@whitman.edu}

\title[Strongly Hopfian abelian Groups]{On Strongly Hopfian Mixed  abelian Groups}
\keywords{(co-)Hopfian group, strongly (co-)Hopfian group, completely decomposable group, Warfield group}
\subjclass[2010]{20K10, 20K20, 20K21, 20K30}

\maketitle

\begin{abstract} The classes of abelian groups that are (uniformly) strongly Hopfian abelian groups, and dually, (uniformly) strongly co-Hopfian abelian groups have been studied by several authors, including Abdelalim (2015) and Abdelalim-Chillali-Essanouni (2015).
This paper extends these investivations in the case of (genuinely) mixed groups. For example, it is shown that a reduced group that is (uniformly) strongly co-Hopfian will always be (uniformly) strongly Hopfian. In addition, a result of Chekhlov-Danchev (submitted) characterizing when a torsion-free group is (uniformly) strongly Hopfian is generalized to the case of (global) Warfield groups.
\end{abstract}

\def\vv#1{\vert{#1}\vert_p}
\def\vvp#1#2{\vert{#1}\vert_{#2}}

\section{Introduction and Background}

All groups considered and studied in the present research paper are simultaneously {\it additive} and {\it abelian}; most of the basic notation and terminology are in agreement with the classical books \cite{F} and \cite{Fu}. In general, $G$ will denote an abelian group with torsion $T$, and if $p$ is a prime, $T_p$ will be its $p$-torsion subgroup; so $T=\oplus_p T_p$. As usual, all ``summand" means ``direct summand".  We will often use other standard conventions, such as $A\leq G$ to indicate that $A$ is a subgroup of $G$, $A\leq^\oplus G$ to mean $A$ is a summand of $G$,  ``rank'' for ``torsion-free rank'' and ``iff'' for ``if and only if.''

\medskip

A group $G$ is called {\it Hopfian} iff it is {\it not} isomorphic to any of its {\it proper} quotients; that is, each of its surjective endomorphisms is injective, i.e., an automorphism. Mimicking \cite{SH15}, a group $G$ is said to be {\it strongly Hopfian} (or simply {\it SH}) provided that,  for each endomorphism $\phi$ of $G$, the ascending chain $$\ker \phi\leq \ker \phi^2\leq\dots \leq \ker \phi^n \leq \dots$$ is eventually stationary, i.e., it stabilizes at some place.

If we let $G^n:=\phi^n(G)$, then it is readily checked that $G$ is SH iff, for all endomorphisms $\phi:G\to G$, there is an integer $n\in \N$ such that $\phi$ restricts to an injection on $G^n$; we call such an $n$ (if it exists) an \textit{SH-exponent} for $\phi$. If this holds, then $\phi$ also restricts to an injection on $G^m\leq G^n$ for all $m\geq n$, so that $\phi$ restricts to an isomorphism $G^m\cong G^{m+1}$ and $\ker \phi^n=\ker \phi^m$ for all $m\geq n$. In particular, if $G$ is SH and $\phi$ is surjective, then $G^n=G^{n+1}=G$, so that we can conclude that $\phi$ is an automorphism, i.e., $G$ is Hopfian.

We say $G$ is \textit{uniformly SH} if there is a fixed $n\in \N$ that is an SH-exponent for all endomorphisms $\phi:G\to G$. Obviously, if $G$ is uniformly SH, then it is SH.

Dually, if the ascending chain $\ker \phi^n$ is replaced by the descending chain $\phi^n(G)=G^n$, then we arrive at the notion of {\it strongly co-Hopfian} (or simply Sco-H) groups, as well as {\it uniformly} Sco-H. The difference between these two notions is well illustrated by the (easily checked) fact that a torsion-free group of finite rank will always be SH (in fact, uniformly SH), but it will be Sco-H iff it is divisible.

Characterizing any of the above four classes of groups quickly reduces to the case of reduced groups (see, for example, Corollary~\ref{reduced}). And if $G$ is reduced and a member of any of these four classes, it will follow that for any prime $p$, that $T_p$ will be finite, and in particular, it will be bounded (see, for example, Proposition~\ref{finite}). It follows from Theorems~2.1 from both \cite{SH15} and \cite{A} that if $G=T$ is reduced and torsion, then it will be in any of these classes iff there is a fixed exponent $e\in \N$ such that for all primes $p$, $T_p$ has at most $p^e$ elements (see also Corollary~\ref{torcase}).

Two of the most studied classes of mixed abelian groups are the \textit{sp-groups} and the \textit{(global) Warfield groups}, and these classes will be the focus of this paper, as well.
To say $G$ is an sp-group means that the inclusion $T=\oplus_p T_p\hookrightarrow \prod_p T_p=:P$ extends to a pure embedding $G\hookrightarrow P$ (i.e., $G/T$ is divisible). Again, when studying the groups in any of these classes, we may assume $G$ is reduced and each $T_p$ is finite, and in this case (by Proposition~\ref{spgroups}), $G$ will be an sp-group iff $G/T$ is divisible. This class is the focus of Section~\ref{secondsec}.

By \cite[Proposition~2.3]{CDK}, if $G$ is a reduced Sco-H group, then $G$ must be an sp-group. One of the main results of this paper (Theorem~\ref{general}) improves upon this by showing that \textit {any} reduced Sco-H (or uniformly Sco-H) group is necessarily SH (respectively, uniformly SH). When might a converse of this result hold, i.e., if $G/T$ is divisible, when can we conclude that $G$ is Sco-H? We verify this in two general cases: When $G$ is SH and $G/T$ has finite rank (Proposition~\ref{last}), and when $T$ is SH (Proposition~\ref{extending}).

Recall that a group is \textit{Warfield} if it is a direct summand of a group with a simple presentation. In our discussions, since we are concerned with groups whose $p$-torsions, $T_p$, are all finite (and hence bounded), these can be decomposed into groups of torsion-free rank 1; again, with finite $p$-torsion (see Proposition~\ref{warfdec}). We denote the class of such rank 1  groups with bounded $p$-torsion by $\B$ ; so any SH Warfield group will be a direct sum of groups in $\B$.  Clearly, any torsion-free group of rank 1 has a simple presentation, and is an element of $\B$.  It follows that the class of Warfield group that are torsion-free coincides with the completely decomposable groups.

The theory of Warfield groups makes extensive use of a category called \textit{WALK}, where two homomorphism $f,g:H\to G$ are considered \textit{WALK-equivalent} if $(f-g)(H)\leq G$ (see, for e.g., \cite[Section~15.8]{Fu}). This allows us to generalize the notion of the \textit{type} of a rank one torsion-free group to the \textit{WALK-type} of an element of $\B$. Using this, we are able to characterize exactly when a Warfield group $G$ whose torsion $T$ is SH also satisfies that property using the WALK-types of its summands in a given decomposition of $G$ into groups from $\B$ (Corollary~\ref{WarWalk}). And since a completely decomposable group is just a Warfield group with $T=0$ (which is obviously SH), this characteriation  generalizes the main result of \cite {CD} (see Corollary~\ref{compdectfg}).

Deciding when a general Warfield group is SH, when we are not given that $T$ is SH, is a much more difficult problem. We provide an explicit solution of only one case of that problem: when $G$ has rank 1 and each $T_p$ is cyclic (Proposition~\ref{warfieldrankone}). The difficulty of that computation is evidence that giving a complete characterization of all groups that are SH, especially those whose torsion is not SH, may ultimately be a problem of insurmountable difficulty.

\section{SH groups and sp-Groups}\label{secondsec}

Often, for some fixed endomorphism $\phi:G\to G$ and $n\in \N$, we will denote $\phi^n (x)=: x^n\in G^n$.  A sequential way to express the definition of SH groups is as follows:

\medskip

\noindent\textit{A group $G$ is SH iff, for each endomorphism $\phi:G\to G$, there is an $n \in \N$ such that for all $x\in G$, if $x^n\ne 0$, then $x^m\ne 0$ for all $m\geq n$.}

\medskip

Again, a value $n\in \N$ satisfying the above will be an SH-exponent for the endomorphism $\phi$, and a group $G$ is SH if, and only if, every map $\phi:G\to G$ has an SH-index. Clearly, if $n$ is an SH-exponent for $\phi$, then so is each natural number $n'\geq n$.

\medskip

The following two claims are easily verified:

\medskip\noindent(1.A) If $G$ is a torsion-free group of finite rank $n\in \N$, then $n$ is an SH-exponent for any endomorphism $\phi:G\to G$; so, $G$ is uniformly SH.

\medskip\noindent(1.B) Suppose $G=\oplus_{i\in I} G_i$, where each $G_i$ is fully invariant in $G$ and $\phi:G\to G$ is an endomorphism of $G$ whose restriction to $G_i$ is $\phi_i$. If $n\in \N$ is an SH-exponent for each $\phi_i$, then $n$ will also be an SH-exponent for $\phi$.

\medskip

We start our work with the following crucial observation, which actually is \cite[Proposition~2.7]{SH15} (we, however, include a brief proof for the convenience of the reader).

\begin{proposition}\label{finite}
If $G$ is an SH group, then each $T_p$ is finite.
\end{proposition}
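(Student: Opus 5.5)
The plan is to argue by contraposition. Assume some $T_p$ is infinite, and build an endomorphism $\phi:G\to G$ for which the chain $\ker\phi\leq\ker\phi^2\leq\cdots$ never stabilizes. In each case the idea is to find a summand (or a suitably embedded subgroup) of $G$ on which a ``shift'' can act, and then extend the shift to all of $G$.

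\emph{Divisible case.} Suppose first that $T_p$ is not reduced. Then $G$ contains a copy $D$ of $\Z(p^\infty)$, and $D$ is a summand of $G$ because it is divisible. Write $G=D\oplus K$ and let $\phi$ be multiplication by $p$ on $D$ and zero on $K$. Then $\ker\phi^n\supseteq D[p^n]$, and these subgroups strictly increase, so $G$ is not SH.

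\emph{Reduced case.} Now assume $T_p$ is reduced and infinite, and let $B=\oplus_k B_k$ be a basic subgroup, with $B_k$ a direct sum of copies of $\Z(p^k)$. If $B$ were finite, it would be bounded, hence a summand of $T_p$ with divisible complement. Reducedness would then force $T_p=B$ to be finite, so $B$ is infinite. Two subcases arise.
\begin{itemize}
\item If some $B_k$ is infinite, then $B_k\cong\oplus_{i\in\N}\langle e_i\rangle$ with each $e_i$ of order $p^k$. This subgroup is pure and bounded in $T_p$, hence in $G$, so it is a summand of $G$. Let $\phi$ be the shift $e_1\mapsto 0$, $e_{i+1}\mapsto e_i$, extended by $0$ on a complement. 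Then $\ker\phi^n\supseteq\langle e_1,\dots,e_n\rangle$ and $e_{n+1}\notin\ker\phi^n$, so the chain never stops.
\item If every $B_k$ is finite, then $B$ is unbounded. We pick cyclic summands $\langle c_i\rangle$ of $B$ of orders $p^{k_1}<p^{k_2}<\cdots$. The subgroup $C=\oplus_i\langle c_i\rangle$ is pure in $G$, and the downward shift $c_1\mapsto 0$, $c_{i}\mapsto c_{i-1}$ is a well-defined endomorphism of $C$, since $p^{k_i}c_{i-1}=0$. Its kernels strictly increase.
\end{itemize}

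\emph{Main obstacle.} The hard step is the unbounded subcase: extending this shift from $C$ to an endomorphism of $G$ with values in $G$. Bounded cyclic groups are pure-injective, so each coordinate map $C\to\langle c_{i-1}\rangle$ extends to $G$. However, the assembled map $G\to\prod_i\langle c_i\rangle$ need not land in $G$.

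My first attempt would be to pass to $G/p^mG$ for a suitable $m$. There $(C+p^mG)/p^mG\cong C/p^mC$ is pure and bounded, hence a summand. This gives a projection $G\to C/p^mC\cong\oplus_i\Z(p^{\min(k_i,m)})$, which contains infinitely many copies of $\Z(p^m)$. I would then compose with a shift and an embedding back into $G$, choosing the images of height $0$ (for instance sending generators to elements of the form $p^{k_j-m}c_j$ only when $k_j=m$-type coincidences are arranged by thinning the sequence). The aim is to ensure iterates do not collapse into $p^mG$.

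If this cannot be made to work directly, the fallback is the route of \cite[Proposition~2.7]{SH15}: show that an unbounded reduced $p$-group with all $B_k$ finite still admits, as an endomorphic image, an infinite elementary summand-like piece on which the shift argument of the first subcase applies.
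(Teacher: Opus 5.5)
Your divisible case and your first reduced subcase (some $B_k$ infinite) are correct, but the proposal does not prove the statement. The unbounded subcase (reduced $T_p$, every $B_k$ finite) is left open, and neither of your repairs works as written.

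\begin{itemize}
\item The $G/p^mG$ sketch fails. Since $k_1<k_2<\cdots$, a coincidence $k_j=m$ occurs for at most one $j$, so thinning cannot arrange it. For every $j$ with $k_j\ge 2m$, the candidate image $p^{k_j-m}c_j=p^m(p^{k_j-2m}c_j)$ lies in $p^mG$. That is all but finitely many $j$, and these images are annihilated by the next application of the projection $G\to C/p^mC$. So the iterates collapse rather than shift, and you get no strictly increasing kernel chain.
\item The fallback is circular. The result you would invoke is precisely the statement being proved; the paper itself identifies this proposition with that reference.
\end{itemize}

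The missing idea is that you never need to extend a shift in the unbounded case. The chain condition is already violated by the simplest endomorphism, $\phi=p\cdot 1_G$. Here $\ker\phi^n=G[p^n]$, and if $T_p$ is unbounded then $G[p^n]\ne G[p^{n+1}]$ for every $n$, contradicting SH. So SH forces $G[p^n]=G[p^{n+1}]$ for some $n$, which makes $T_p=G[p^n]$ bounded. This disposes of your divisible case and your unbounded case at once.

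A bounded $T_p$ is pure in $G$, hence a summand of $G$, and it is a direct sum of cyclic groups of orders at most $p^n$. If it were infinite, some $\Z(p^m)^{(\omega)}$ with $m\le n$ would be a summand of $G$, and your shift argument from the first subcase finishes the proof. This is exactly the paper's proof.
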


\begin{proof}
Applying the definition with $\phi$ given by multiplication by $p$, then, for some $n\in \N$, it must be that $G[p^n]=G[p^{n+1}]$, which means that $T_p=G[p^n]$ is bounded. Therefore, $T_p\leq^\oplus G$, and if $T_p$ were infinite, then for some $m\leq n$ there would be a summand $K\leq^\oplus T_p\leq^\oplus G$ such that $K\cong \Z(p^m)^{(\omega)}$, which would contradict that $G$ is SH giving the claim.
\end{proof}

Remember that a group $G$ is {\it co-Hopfian} provided that it is {\it not} isomorphic to any of its {\it proper} subgroups, that is, each its injective endomorphism is surjective, i.e., an automorphism. In addition, the group $G$ is {\it strongly co-Hopfian} or just {\it Sco-H} for short, provided that the descending chain
$$G^1\geq G^2\geq\dots \geq G^n \geq \dots$$ is eventually stationary, i.e., for each endomorphism $\phi$ of $G$, it stabilizes at some place  (see, for example, \cite{A} or \cite{CDK}).

\medskip

The following assertion is just a restatement of Theorems~2.1 from both \cite{SH15} and \cite{A}.

\begin{corollary}\label{torcase}
Suppose $G$ is a reduced torsion group. Then, the following are equivalent:

(1) $G$ is SH.

(2) $G$ is uniformly SH.

(3) $G$ is Sco-H.

(4) $G$ is uniformly Sco-H.

(5) There is an integer $e\in \N$ such that, for all primes $p$, $G_p$ has cardinality at most $p^e$.
\end{corollary}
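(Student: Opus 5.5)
The plan is to run the cycle of implications $(2)\Rightarrow(1)\Rightarrow(5)\Rightarrow(2)$, and separately $(4)\Rightarrow(3)\Rightarrow(5)\Rightarrow(4)$. The implications $(2)\Rightarrow(1)$ and $(4)\Rightarrow(3)$ hold by definition. Throughout, $G=\oplus_p G_p$, and each $G_p$ is fully invariant. So any endomorphism $\phi$ splits as $\oplus_p\phi_p$ with $\phi_p\in \mathrm{End}(G_p)$. By (1.B), a common SH-exponent for all the $\phi_p$ is an SH-exponent for $\phi$. The same works dually, since $\phi^n(G)=\oplus_p\phi_p^n(G_p)$ stabilizes at a step $n$ as soon as each $\phi_p^n(G_p)$ has stabilized by step $n$.

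\textbf{$(5)\Rightarrow(2),(4)$.} If $|G_p|\le p^e$, then every chain of subgroups of $G_p$ has length at most $e$. Indeed, each proper inclusion at least multiplies the order by $p$. Hence $\ker\phi_p^n$ and $\phi_p^n(G_p)$ both stabilize by step $n=e$, uniformly in $p$ and $\phi$. This gives uniform SH and uniform Sco-H with exponent $e$.

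\textbf{$(1)\Rightarrow(5)$.} By Proposition~\ref{finite}, each $G_p$ is finite. Suppose, for contradiction, that $\log_p|G_p|$ is unbounded. Write $G_p\cong\oplus_i \Z(p^{k_i})$. Then either the exponents $k$ or the numbers of cyclic summands are unbounded along some sequence of primes. For each prime $p$ in that sequence, I choose a nilpotent endomorphism $\phi_p$ of $G_p$ whose nilpotency index $N_p$ is large, with $N_p\to\infty$:
\begin{itemize}
\item if $G_p$ has a cyclic summand $\Z(p^k)$, take multiplication by $p$ on that summand and $0$ on a complement, so $N_p=k$;
\item if $G_p$ contains $\Z(p^{k})^{r}$ as a summand, take the shift $e_1\mapsto e_2\mapsto\cdots\mapsto e_r\mapsto0$, so $N_p=r$.
\end{itemize}
Since $\log_p|G_p|\le \sum_i k_i$, unboundedness forces $\max(k,r)$ to be unbounded along a subsequence, so $\sup_p N_p=\infty$. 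Put $\phi=\oplus\phi_p$ (with $\phi_p=0$ elsewhere). For any candidate exponent $n$, pick $p$ with $N_p>n+1$ and an $x\in G_p$ with $\phi^{n}(x)\ne0$. Nilpotency gives $\phi^{N_p}(x)=0$, which violates the sequential criterion for SH. So $\phi$ has no SH-exponent, a contradiction.

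\textbf{$(3)\Rightarrow(5)$.} This is dual. If $G$ is Sco-H, multiplication by $p$ forces $p^nG_p=p^{n+1}G_p$. Since $G$ is reduced, this gives $p^nG_p=0$, so $G_p$ is bounded. It is then finite: otherwise $G_p$ contains a summand $\Z(p^m)^{(\omega)}$, and the shift $e_i\mapsto e_{i+1}$ has strictly decreasing images. The same nilpotent maps $\phi_p$ as above then give a descending chain $\phi^n(G)$ that never stabilizes, since $\phi_p^{n}(G_p)\supsetneq\phi_p^{n+1}(G_p)$ whenever $n<N_p$.

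The only delicate step is ensuring that an unbounded $\log_p|G_p|$ really yields unbounded nilpotency indices. This is handled by the dichotomy between large exponent and large multiplicity of one exponent: the number of distinct exponents is at most the maximal exponent, so if both were bounded, $\log_p|G_p|$ would be bounded too.
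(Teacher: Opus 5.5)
Your proof is correct, but it takes a different route from the paper. The paper gives no argument of its own: it calls the corollary a restatement of Theorem~2.1 of \cite{SH15} and Theorem~2.1 of \cite{A}, the published characterizations of the torsion (uniformly) SH and (uniformly) Sco-H groups.

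You instead give a self-contained proof that uses only material already in the paper:
\begin{itemize}
\item Proposition~\ref{finite} gives finiteness of each $G_p$ on the SH side.
\item On the Sco-H side you use its dual: multiplication by $p$ plus reducedness gives boundedness, and a shift on $\Z(p^m)^{(\omega)}$ gives finiteness.
\item The splitting (1.B) over the fully invariant primary components, together with its image-side analogue, reduces everything to the individual $G_p$.
\end{itemize}

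Your two mechanisms are the right ones. First, any chain of subgroups of a group of order at most $p^e$ has at most $e$ strict steps. This yields the uniform exponent $e$ simultaneously for kernels and for images. Second, a nilpotent endomorphism whose nilpotency indices $N_p$ are unbounded over the primes defeats SH and Sco-H at the same time. You build it from either a large exponent or a large multiplicity of a single exponent. Your counting justifies that dichotomy: if both were bounded by $K$ and $R$, then $\log_p|G_p|\le K^2R$.

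The citation buys brevity. Your version makes the corollary checkable inside the paper. It also shows exactly where reducedness enters: only in $(3)\Rightarrow(5)$, since $\Z(p^\infty)$ is uniformly Sco-H but not SH. By contrast, $(1)\Rightarrow(5)$ holds for any torsion group.

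Minor points:
\begin{itemize}
\item $\log_p|G_p|=\sum_i k_i$ is an equality, not just an inequality.
\item You only need $N_p>n$, not $N_p>n+1$.
\item In $(5)\Rightarrow(2),(4)$ you tacitly use the standard fact that once $\ker\phi^n=\ker\phi^{n+1}$ (respectively $\phi^n(G)=\phi^{n+1}(G)$), the chain is constant from then on. It is worth stating explicitly.
\end{itemize}
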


We note the following important fact, which parallels \cite[Theorem~2.13]{CDK}:

\begin{proposition}\label{uniformly}
If $G$ is a uniformly SH group, then $T$ satisfies the five conditions above; e.g., it is SH.
\end{proposition}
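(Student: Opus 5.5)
Our goal is condition (5) of Corollary~\ref{torcase} for $T$. We take a fixed $n\in\N$ that is an SH-exponent for every endomorphism of $G$, and we will show that $|T_p|\le p^{e}$ for every prime $p$, for some $e$ depending only on $n$. Since $T$ is then torsion and satisfies (5), the equivalences of Corollary~\ref{torcase} apply. One point needs care: that corollary is stated for reduced torsion groups. Proposition~\ref{finite} shows each $T_p$ is finite, so $T$ is reduced, and the corollary applies.

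\textbf{Step 1.} By Proposition~\ref{finite} each $T_p$ is finite, hence bounded, hence a summand of $G$. Write $G=T_p\oplus H_p$. Any endomorphism $\psi$ of $T_p$ extends to the endomorphism $\psi\oplus 0$ of $G$. The kernels of its powers are $\ker\psi^k\oplus H_p$, so $n$ is an SH-exponent for $\psi$ as well. Thus every $T_p$ is uniformly SH with the same exponent $n$, independent of $p$.

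\textbf{Step 2.} This uniform exponent bounds $T_p$. Write $T_p\cong\bigoplus_{i}\Z(p^{k_i})$, a finite sum. First, if $T_p$ contains a cyclic summand $\Z(p^k)$, let $\psi$ be multiplication by $p$ on that summand and $0$ on a complement. Then $\ker\psi^j$ grows strictly for $j\le k$, so $k\le n$. Second, if $T_p$ has $r$ cyclic summands, the orders of the cyclic summands being all $\le p^n$ and at most $n$ distinct, choose those of a common order $p^k$ with multiplicity $s\ge r/n$. On $\Z(p^k)^{s}$ use the nilpotent shift $(x_1,\dots,x_s)\mapsto(0,x_1,\dots,x_{s-1})$, extended by $0$ on the complement. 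Its kernels strictly increase for $s$ steps, so $s\le n$ and $r\le n^2$. Hence $|T_p|\le p^{n\cdot n^2}=p^{n^3}$, and we may take $e=n^3$.

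\textbf{Step 3.} Conclude with Corollary~\ref{torcase}: $T$ satisfies (5) and therefore all five conditions. The main obstacle is Step 1, namely making sure the SH-exponent passes to summands uniformly. The kernel computation $\ker(\psi\oplus0)^k=\ker\psi^k\oplus H_p$ settles this. The rest is the elementary counting in Step 2, which only uses strict growth of kernel chains under nilpotent maps.
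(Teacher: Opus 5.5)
Your argument is correct. Your Step~1 is the paper's key step: each $T_p$ is finite, hence a summand $G=T_p\oplus H_p$, and extending endomorphisms of $T_p$ by $0$ makes the uniform exponent $n$ of $G$ a uniform SH-exponent for every $T_p$.

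Where you part ways with the paper is the finish. The paper never looks inside $T_p$. Each $T_p$ is fully invariant in $T=\oplus_p T_p$, so observation (1.B) immediately makes $n$ a uniform SH-exponent for $T$ itself. That is condition (2) of Corollary~\ref{torcase}, and the other four conditions are read off from that corollary.

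You instead aim at condition (5) directly:
\begin{itemize}
\item multiplication by $p$ on a cyclic summand gives $p^nT_p=0$;
\item the nilpotent shift on a homogeneous block $\Z(p^k)^s$ gives $s\le n$;
\item hence there are at most $n^2$ cyclic summands and $|T_p|\le p^{n^3}$.
\end{itemize}
Your route is longer, and in effect reproves by hand the implication (2)$\Rightarrow$(5) that the paper takes from the cited Theorems~2.1. In exchange it avoids (1.B) and gives an explicit bound $e=n^3$ in terms of the uniform exponent; the paper's route is the two-line version.

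Both arguments rely on Corollary~\ref{torcase} for the remaining conditions. Your check that $T$ is reduced, so that the corollary applies, is a point the paper leaves implicit.
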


\begin{proof}
Suppose $n$ is a uniform SH-exponent for $G$. If $p$ is any prime, then since in conjunction with Proposition~\ref{finite} each $T_p$ is finite, there is a decomposition $G=T_p\oplus H_p$ (see \cite{F}). It thus follows that $n$ will also be a uniform SH-exponent for $T_p$. So, by point (1.B) listed above, $n$ will also be a uniform SH-exponent for $T=\oplus_p T_p$, as required.
\end{proof}

We will use the following  immediate consequence of Corollary~\ref{torcase} and Proposition~\ref{uniformly}

\begin{corollary}\label{torsum}
If $G=H\oplus K$, where $H$ and $K$ are uniformly SH, then $T$ is SH.
\end{corollary}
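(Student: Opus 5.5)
The plan is to reduce everything to the torsion parts of $H$ and $K$ and then invoke Corollary~\ref{torcase} through condition (5), which is additive in a controlled way. Write $T_H$ and $T_K$ for the torsion subgroups of $H$ and $K$. The torsion subgroup of $G=H\oplus K$ is then $T=T_H\oplus T_K$, and for each prime $p$ we have $T_p=(T_H)_p\oplus (T_K)_p$.

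First, by Proposition~\ref{uniformly} applied to $H$ and to $K$, the groups $T_H$ and $T_K$ satisfy the five equivalent conditions of Corollary~\ref{torcase}. Before using that corollary I must confirm that $T_H$ and $T_K$ are reduced, so that the corollary actually applies. This follows from Proposition~\ref{finite}: each $p$-component is finite, so no $p$-component contains a copy of $\Z(p^\infty)$, and a torsion group with no quasi-cyclic subgroup is reduced. In particular, condition (5) yields integers $e_H,e_K\in\N$ with $|(T_H)_p|\le p^{e_H}$ and $|(T_K)_p|\le p^{e_K}$ for every prime $p$.

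Next I compute $|T_p|=|(T_H)_p|\cdot|(T_K)_p|\le p^{e_H+e_K}$ for all primes $p$. Moreover, $T$ is reduced, being a direct sum of reduced groups (equivalently, each $T_p$ is finite). So $T$ is a reduced torsion group satisfying condition (5) of Corollary~\ref{torcase} with $e=e_H+e_K$. The implication (5)$\Rightarrow$(1) of that corollary then shows that $T$ is SH.

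I do not expect any step to be a real obstacle. The only point requiring care is checking reducedness, so that Corollary~\ref{torcase} may legitimately be applied, and Proposition~\ref{finite} handles that immediately.
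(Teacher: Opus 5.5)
Your proof is correct and is the argument the paper intends when it calls this an immediate consequence of Corollary~\ref{torcase} and Proposition~\ref{uniformly}: you get condition (5) for the torsion of $H$ and of $K$, add the exponents to get the bound $p^{e_H+e_K}$ on $|T_p|$, and conclude via (5)$\Rightarrow$(1). Your reducedness check through Proposition~\ref{finite} is a correct detail that the paper leaves implicit.
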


It is important to emphasize, that just because $G$ is SH, or Sco-H (but \textbf{not} uniformly so), it is possible for $T$ to fail to be SH (which in this case is the same as failing to be Sco-H). We will return to this point repeatedly. Again, if $G$ is reduced with one of these properties, it \textbf{will} follow that each $T_p$ is finite, just that they will not necessarily all be of cardinality at most $p^e$ for some fixed $e\in \N$.

\medskip

We proceed now by proving a few more technicalities.

\begin{lemma}\label{SH-exponent}
Suppose $H$ is a fully invariant subgroup of $G$, $\phi:G\to G$ is an endomorphism, $\hat \phi:H\to H$ is its restriction to $H$ and $\ov \phi:\ov G:=G/H\to \ov G$ is the induced endomorphism of $\ov G$. If $n_1$ is an SH-exponent for $\hat \phi$ and $n_2$ is an SH-exponent for $\ov \phi$, then $n=n_1+n_2$ is an SH-exponent for $\phi$.
\end{lemma}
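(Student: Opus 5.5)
We use the sequential form of the definition: $n$ is an SH-exponent for $\phi$ iff for every $x\in G$ with $x^n\neq 0$ we have $x^m\neq 0$ for all $m\geq n$. Equivalently, $\phi$ is injective on $\phi^n(G)$. So we fix $x\in G$ with $\phi^{n}(x)\neq 0$, where $n=n_1+n_2$. We must show $\phi^m(x)\neq 0$ for every $m\ge n$. We write $\ov{y}$ for the image of $y$ in $\ov G=G/H$; since $H$ is fully invariant, $\ov{\phi^k(y)}=\ov\phi^{\,k}(\ov y)$.

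\emph{Case 1: $\ov\phi^{\,n_2}(\ov x)\neq 0$.} Since $n_2$ is an SH-exponent for $\ov\phi$, we get $\ov\phi^{\,m}(\ov x)\ne 0$ for all $m\ge n_2$. Hence $\phi^m(x)\notin H$, and in particular $\phi^m(x)\neq 0$, for all $m\geq n_2$. This covers all $m\ge n$.

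\emph{Case 2: $\ov\phi^{\,n_2}(\ov x)=0$.} Then $y:=\phi^{n_2}(x)\in H$. Moreover
\[
\hat\phi^{\,n_1}(y)=\phi^{n_1+n_2}(x)=\phi^n(x)\neq 0 .
\]
Since $n_1$ is an SH-exponent for $\hat\phi$, we get $\hat\phi^{\,k}(y)\neq 0$ for all $k\ge n_1$. Thus for any $m\ge n$, writing $m=n_2+k$ with $k\ge n_1$, we have $\phi^m(x)=\hat\phi^{\,k}(y)\ne 0$.

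The two cases are exhaustive, so $n$ is an SH-exponent for $\phi$. The only point needing care is the compatibility $\ov{\phi^k(x)}=\ov\phi^{\,k}(\ov x)$ together with the fact that $\phi$ maps $H$ into $H$. Both follow from full invariance of $H$, so there is no real obstacle. We note that the argument in Case 1 actually works with exponent $n_2$ alone; the sum $n_1+n_2$ is needed only for Case 2.
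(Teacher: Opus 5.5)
Your proof is correct and is essentially the paper's argument: you split on whether $\ov\phi^{\,n_2}(\ov x)$ vanishes, use the SH-exponent of $\ov\phi$ in the first case, and apply the SH-exponent of $\hat\phi$ to $y=\phi^{n_2}(x)\in H$ in the second.
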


\begin{proof}
Suppose $x\in G$, $x^n\ne 0$ and $m\geq n$; we need to show that $x^m\ne 0$. To that end, suppose first that $x^{n_2}\not\in H$. Therefore, since $n_2$ is an SH-exponent for $\ov \phi$ and $n_2\leq n\leq m$, we can infer that $x^m\not\in H$, so that, in particular, $x^m\ne 0$, as required.

If the last paragraph does not pertain, it follows that $y:=x^{n_2}\in H$. Now, since
\[
y^{n_1}=(x^{n_2})^{n_1}=x^n \ne 0
\]
and $m-n_2\geq n-n_2=n_1$, we can conclude that
\[
0\ne y^{m-n_2}=(x^{n_2})^{m-n_2}=x^m,
\]
completing the proof.
\end{proof}

The following statement generalizes \cite[Proposition~2.5]{SH15} and is an immediate consequence of Lemma~\ref{SH-exponent}.

\begin{corollary}\label{invariant}
Suppose $H$ is a fully invariant subgroup of $G$. If $G/H$ and $H$ are both SH (or uniformly SH), then $G$ is also SH (respectively, uniformly SH).
\end{corollary}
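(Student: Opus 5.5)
The plan is to apply Lemma~\ref{SH-exponent} directly, one endomorphism at a time. Fix an arbitrary endomorphism $\phi:G\to G$. Since $H$ is fully invariant, $\phi(H)\leq H$. Hence $\phi$ restricts to an endomorphism $\hat\phi:H\to H$, and it induces a well-defined endomorphism $\ov\phi:\ov G=G/H\to\ov G$ given by $\ov\phi(g+H)=\phi(g)+H$. These are exactly the data in the hypotheses of Lemma~\ref{SH-exponent}.

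For the SH statement, I proceed as follows.
\begin{enumerate}
\item Since $H$ is SH, the endomorphism $\hat\phi$ has some SH-exponent $n_1$.
\item Since $G/H$ is SH, the endomorphism $\ov\phi$ has some SH-exponent $n_2$.
\item Lemma~\ref{SH-exponent} then gives that $n_1+n_2$ is an SH-exponent for $\phi$.
\end{enumerate}
As $\phi$ was arbitrary, every endomorphism of $G$ has an SH-exponent, so $G$ is SH by the sequential characterization at the start of Section~\ref{secondsec}.

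For the uniform statement, let $N_1$ be a uniform SH-exponent for $H$ and $N_2$ one for $G/H$. For any $\phi$, the maps $\hat\phi$ and $\ov\phi$ are endomorphisms of $H$ and $G/H$ respectively. So $N_1$ is an SH-exponent for $\hat\phi$ and $N_2$ is one for $\ov\phi$, and Lemma~\ref{SH-exponent} yields that $N_1+N_2$ is an SH-exponent for $\phi$. This single integer does not depend on $\phi$, so $G$ is uniformly SH.

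There is no real obstacle here. The only point to check is that full invariance makes $\hat\phi$ and $\ov\phi$ well defined endomorphisms, which is what allows the lemma to be invoked. In the uniform case one should also note that the exponent $N_1+N_2$ is chosen before $\phi$, so it is genuinely independent of $\phi$.
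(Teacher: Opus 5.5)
Your proposal is correct and matches the paper, which presents this corollary as an immediate consequence of Lemma~\ref{SH-exponent}. You apply the lemma to each endomorphism, using full invariance to get $\hat\phi$ and $\ov\phi$, and in the uniform case you take the fixed exponent $N_1+N_2$; that is exactly the intended argument.
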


Notice that Corollary~\ref{invariant} has a number of immediate consequences like these:

\begin{corollary}
Suppose $G$ has torsion part $T$ such that both $T$ and $G/T$ are SH (or uniformly SH), then $G$ is also SH (respectively, uniformly SH).
\end{corollary}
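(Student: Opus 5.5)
The plan is to apply Corollary~\ref{invariant} with $H=T$, the torsion subgroup of $G$. Two things need to be checked. First, $T$ is a fully invariant subgroup of $G$. Second, the quotient $G/T$ and the subgroup $H=T$ satisfy the hypotheses of that corollary.

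Full invariance is immediate. If $\phi:G\to G$ is any endomorphism and $x\in T$ with $kx=0$ for some $k\in\N$, then $k\phi(x)=\phi(kx)=0$, so $\phi(x)\in T$. Hence $\phi(T)\leq T$ for every endomorphism $\phi$.

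With $H=T$, Corollary~\ref{invariant} requires exactly that $T$ and $G/T$ are both SH, or both uniformly SH. These are precisely the assumptions of the present statement. The corollary then yields that $G$ is SH, or uniformly SH respectively. Unwinding the corollary, the mechanism is Lemma~\ref{SH-exponent}: given $\phi$, take an SH-exponent $n_1$ for the restriction $\hat\phi:T\to T$ and an SH-exponent $n_2$ for the induced map $\ov\phi$ on $G/T$. Then $n_1+n_2$ is an SH-exponent for $\phi$. In the uniform case, $n_1$ and $n_2$ can be chosen independently of $\phi$, so $n_1+n_2$ is a uniform SH-exponent for $G$.

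There is no real obstacle here. The only point needing care is that full invariance is what makes $\hat\phi$ and $\ov\phi$ well-defined, and that is settled by the observation above.
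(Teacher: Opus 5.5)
Your proposal is correct and follows the paper's route exactly: the paper gives this as an immediate consequence of Corollary~\ref{invariant} (itself resting on Lemma~\ref{SH-exponent}), applied with $H=T$. Your explicit check that $T$ is fully invariant, and your observation that $n_1+n_2$ is a uniform exponent in the uniform case, are just the details the paper leaves implicit.
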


\begin{corollary}\cite[Proposition~2.5]{SH15}\label{summandinv}
Suppose $G=H\oplus K$, where $H$ is fully invariant in $G$ (i.e., {\rm Hom}$(H,K)=0$). Then, $G$ is SH (or uniformly SH) iff both $H$ and $K$ are SH (or, respectively, uniformly SH).
\end{corollary}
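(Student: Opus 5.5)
The plan is to prove the two implications separately, with Corollary~\ref{invariant} doing most of the work for the backward direction and a restriction/extension argument for the forward direction.

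\emph{Backward direction.} Assume $H$ and $K$ are both SH (resp.\ uniformly SH). Since $H$ is fully invariant in $G$ and $G/H\cong K$, Corollary~\ref{invariant} applies directly. Explicitly, Lemma~\ref{SH-exponent} shows that if $n_1$ is an SH-exponent for the restriction of an endomorphism $\phi$ of $G$ to $H$, and $n_2$ is one for the induced map on $G/H\cong K$, then $n_1+n_2$ is an SH-exponent for $\phi$. In the uniform case the exponents $n_1,n_2$ are fixed, so their sum is a uniform exponent for $G$.

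\emph{Forward direction.} Assume $G$ is SH (resp.\ uniformly SH). The point is that summands of such groups inherit the property, and full invariance is not needed for this. Let $\pi_H,\pi_K$ be the projections.

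For $H$, take $\alpha\in\mathrm{End}(H)$ and extend it to $\phi=\alpha\oplus 0_K$ on $G$. Then $\phi^m=\alpha^m\oplus 0$ for $m\ge 1$, so $\ker\phi^m=\ker\alpha^m\oplus K$.

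For $K$, take $\beta\in\mathrm{End}(K)$ and extend it to $\phi=0_H\oplus\beta$. Then $\ker\phi^m=H\oplus\ker\beta^m$.

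In either case, if $n$ is an SH-exponent for $\phi$, so that $\ker\phi^n=\ker\phi^m$ for all $m\ge n$, the displayed decompositions give $\ker\alpha^n=\ker\alpha^m$ (resp.\ $\ker\beta^n=\ker\beta^m$). Thus $n$ is an SH-exponent for $\alpha$ (resp.\ $\beta$). In the uniform case a single $n$ works for every $\phi$ on $G$, hence for every $\alpha$ and every $\beta$.

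\emph{Where care is needed.} The only point requiring attention is checking that the iterates of $\phi$ really decompose as stated, which is immediate because $\phi$ is diagonal with respect to $G=H\oplus K$. No real obstacle is expected.
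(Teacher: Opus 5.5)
Your proposal is correct and matches the paper's proof. The backward direction is Corollary~\ref{invariant} applied with $G/H\cong K$. The forward direction is closure of (uniformly) SH groups under summands, which the paper calls fairly obvious and cites, and which you verify explicitly by extending endomorphisms by zero.
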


\begin{proof}
The direction $(\Rightarrow)$ follows at once from the (fairly obvious) fact that SH groups are closed under summands (compare with \cite{SH15}). The direction $(\Leftarrow)$ follows directly from Corollary~\ref{invariant}.
\end{proof}

\begin{corollary}\label{reduced}
Suppose $G=D\oplus R$, where $D$ is divisible and $R$ is reduced. Then, $G$ is SH (or uniformly SH) iff both $D$ and $R$ are SH (or, respectively, uniformly SH).
\end{corollary}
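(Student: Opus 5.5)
The plan is to obtain this as a special case of Corollary~\ref{summandinv}. The divisible part $D$ of $G$ is fully invariant, since any homomorphic image of a divisible group is divisible and hence lies in the maximal divisible subgroup. Writing $G=D\oplus R$ with $D$ the maximal divisible subgroup and $R$ reduced, we have $\mathrm{Hom}(D,R)=0$, because the image of $D$ in $R$ would be a divisible subgroup of the reduced group $R$. So $D$ is fully invariant in $G$, which is exactly the hypothesis of Corollary~\ref{summandinv} with $H=D$ and $K=R$.

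Applying Corollary~\ref{summandinv} then gives both directions at once, for SH and for uniformly SH. The direction $(\Rightarrow)$ uses that SH (and uniformly SH) groups are closed under summands. To see this, let $\psi$ be an endomorphism of a summand $A$ of $G$, and extend it to $\psi\oplus 0$ on $G$. Any SH-exponent of $\psi\oplus 0$ restricts to an SH-exponent of $\psi$, and a uniform exponent stays uniform. The direction $(\Leftarrow)$ is Corollary~\ref{invariant} applied to the fully invariant subgroup $D$. Here $G/D\cong R$, so SH (respectively, uniform SH) of $D$ and of $R$ gives the same property for $G$, with exponent $n_1+n_2$ from Lemma~\ref{SH-exponent}.

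The one point to check carefully is the statement itself. If it allows an arbitrary decomposition $G=D\oplus R$ with $D$ divisible and $R$ reduced, rather than $D$ being the maximal divisible subgroup, nothing changes. Any such $D$ is automatically the maximal divisible subgroup, because the maximal divisible subgroup of $G$ meets $R$ in a divisible subgroup of a reduced group, hence in $0$. So the argument goes through as stated, and I do not expect a genuine obstacle; the proof is a direct invocation of the earlier corollaries.
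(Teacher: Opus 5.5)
Your proposal is correct and follows the paper's route: the paper lists this as an immediate consequence of Corollary~\ref{invariant}, namely Corollary~\ref{summandinv} with $H=D$ and $K=R$, using $\mathrm{Hom}(D,R)=0$. Your final paragraph is harmless but not needed, since $\mathrm{Hom}(D,R)=0$, and hence full invariance of $D$, holds for any divisible $D$ and reduced $R$ without identifying $D$ as the maximal divisible subgroup.
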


\begin{corollary}
Suppose $G=T\oplus H$, where $T$ is torsion and $H$ is torsion-free. Then $G$ is SH (or uniformly SH) iff both $T$ and $H$ are SH (or, respectively, uniformly SH).
\end{corollary}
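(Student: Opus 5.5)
The statement will follow from Corollary~\ref{summandinv} once we check that, in $G=T\oplus H$ with $T$ torsion and $H$ torsion-free, the summand $T$ is fully invariant, i.e., ${\rm Hom}(T,H)=0$. This is immediate: a homomorphism sends an element of finite order to an element of finite order, and the only such element of the torsion-free group $H$ is $0$. Equivalently, for any endomorphism $\phi$ of $G$ we have $\phi(T)\leq T$, because $T$ is precisely the set of elements of finite order in $G$.

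With that in hand, I would argue as follows. For the direction $(\Rightarrow)$, I use the closure of SH (and uniformly SH) groups under summands, which was already noted in the proof of Corollary~\ref{summandinv}. Concretely, any endomorphism $\psi$ of $T$ extends to the endomorphism $\psi\oplus 0$ of $G$. The kernels of its powers restrict to those of $\psi$, so an SH-exponent for $\psi\oplus 0$ is also one for $\psi$. The same argument applies to $H$, and the uniform version comes along with no extra work.

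For the direction $(\Leftarrow)$, I would apply Corollary~\ref{invariant} with the fully invariant subgroup $T$. Here $G/T\cong H$ is SH (respectively, uniformly SH), and so is $T$. Lemma~\ref{SH-exponent} then supplies the exponent $n_1+n_2$, which handles the uniform case as well.

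There is no real obstacle in this argument. The only point needing a sentence of justification is the vanishing of ${\rm Hom}(T,H)$, and that is the standard observation given above.
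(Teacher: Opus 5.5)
Your proposal is correct and matches the paper's approach: the paper presents this as an immediate consequence of Corollary~\ref{invariant} through Corollary~\ref{summandinv}, resting on the same observation that $T$ is fully invariant because ${\rm Hom}(T,H)=0$. Your explicit $\psi\oplus 0$ argument, showing that SH-exponents pass to summands, simply spells out the step the paper calls fairly obvious.
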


We will denote the {\it $p$-height} of $x\in G$ by $\vv x$, or simply by $\vvp x{}$ if there is no danger of confusion.

The group $G$ with torsion subgroup $T$ is said to be an \textit{sp-group} if $G\cong G'$, where $T\leq G'\leq P:= \prod_p T_p$, with $G'/G$ divisible. We will almost exclusively be concerned with the following special case of this concept.

\begin{proposition}\label{spgroups}
Suppose $G$ is a reduced group with bounded $p$-torsion (i.e., each $T_p$ is bounded). Then, $G$ is an sp-group iff $G/T$ is divisible.
\end{proposition}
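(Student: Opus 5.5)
The proof splits into two directions. In both I regard $T$ as sitting inside $P=\prod_p T_p$ via the obvious inclusion. Since each $T_p$ is bounded, $T_p$ is a summand of $G$, say $G=T_p\oplus H_p$ with $H_p$ having no $p$-torsion. This is the basic tool for both directions.

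For ($\Rightarrow$), suppose $T\leq G'\leq P$ with $G\cong G'$ and $G'/T$ divisible. This is the standard definition of an sp-group, meaning $G/T$ is divisible. I read the phrase ``$G'/G$ divisible'' in the statement as a typo for $G'/T$, in line with the introduction's description. With that reading, this direction holds by definition, because $G/T\cong G'/T$.

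For ($\Leftarrow$), assume $G/T$ is divisible. I will build a homomorphism $\pi:G\to P$ extending the inclusion $T\hookrightarrow P$ and show it is an embedding. For each prime $p$, let $\pi_p:G\to T_p$ be the projection along $H_p$, and set $\pi=(\pi_p)_p$. On $T$ this map is the inclusion, since for $t\in T$ its component $t_p$ is mapped by $\pi_p$ to itself. The image $G'=\pi(G)$ then satisfies $T\leq G'\leq P$, and $G'/T$ is a quotient of $G/T$, hence divisible.

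The main obstacle is injectivity. The kernel of $\pi$ is $K=\bigcap_p H_p$, and $K\cap T=0$. Note that $G/H_p\cong T_p$ is bounded, say by $p^{k_p}$. I will show that $K$ is divisible, and then reducedness of $G$ gives $K=0$.

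To show $K$ is divisible, take $x\in K$ and a prime $q$; I need $y\in K$ with $qy=x$. Divisibility of $G/T$ gives $z\in G$ and $t\in T$ with $qz=x+t$. Write $t=t_q+t'$ with $t'$ having no $q$-component. Multiplication by $q$ is an automorphism of $T_{q'}$ for every prime $q'\ne q$, so $t'=qs$ for some $s\in T$, and replacing $z$ by $z-s$ gives $qz=x+t_q$. Now decompose $z=a+b$ with $a\in T_q$ and $b\in H_q$. Comparing components in $G=T_q\oplus H_q$, and using $x\in H_q$, yields $qb=x$ and $qa=t_q$. It remains to adjust $b$ so that it lies in every $H_{p}$. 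For $p\ne q$, write $b=c_p+d_p$ with $c_p\in T_p$ and $d_p\in H_p$. Then $qb=x\in H_p$ forces $qc_p=0$, and since $T_p$ has no $q$-torsion, $c_p=0$. Hence $b\in H_p$ for all $p$, so $b\in K$ and $x=qb$.

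Therefore $K$ is divisible, so $K=0$ because $G$ is reduced, and $\pi$ embeds $G$ as $G'$ with $T\leq G'\leq P$ and $G'/T$ divisible. Purity of $G'$ in $P$ is not required by the definition as stated, though it follows since $P/T$ is torsion-free and $G'/T$ is divisible.
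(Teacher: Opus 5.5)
Your proof is correct, and it takes a different route from the paper for sufficiency. Your reading of ``$G'/G$'' as a typo for ``$G'/T$'' is the right one; it matches the introduction, and as you note, purity of $G'$ in $P$ is then automatic because $P/T$ is torsion-free.

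The paper obtains the map $G\to P$ abstractly. It uses that $P$ is the $\Z$-adic completion of $T$ and that $T$ is $\Z$-adically dense in $G$ (because $G/T$ is divisible), so the inclusion extends by continuity. It then identifies the kernel with the first Ulm subgroup $\widetilde G$ and proves $\widetilde G$ is $p$-divisible by a height argument, using $p^kT_p=0$ to rule out a nonzero element of $T_p$ of height $>k$.

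You instead build the map explicitly from the splittings $G=T_p\oplus H_p$, which exist because $T_p$ is bounded and pure. You then prove divisibility of $K=\bigcap_p H_p$ purely by bookkeeping of components. First you clear the torsion outside $T_q$ using unique $q$-divisibility of $T_p$ for $p\neq q$. Then you use that $T_p$ has no $q$-torsion to see that the element $b$ with $qb=x$ lies in every $H_p$.

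The two maps in fact coincide. Any extension of $T\hookrightarrow P$ to $G$ is unique, since $\mathrm{Hom}(G/T,P)=0$ ($G/T$ is divisible and $P$ is reduced). So your $K$ is exactly the paper's $\widetilde G$.

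Your route buys self-containment. It needs no completion or density/continuity argument, and no justification of the identification of the kernel with $\widetilde G$, which the paper asserts without proof. It also makes visible that boundedness is used only to split off each $T_p$. The paper's route is more conceptual: it exhibits the kernel canonically as the first Ulm subgroup and presents the sp-embedding as the $\Z$-adic completion map.

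One small detail to add to your write-up: for $\pi|_T$ to be the inclusion, you also need $\pi_p(T_{p'})=0$ for $p'\neq p$. This holds because a $p'$-group maps trivially into the $p$-group $T_p$.
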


\begin{proof}
Necessity being immediate from the definition of sp-groups, suppose that $G/T$ is divisible. It is easy to see that $P=\prod_p T_p$ will be the completion of $T$ in the $\Z$-adic topology. Since $T$ is dense in the $\Z$-adic topology on $G$, the inclusion $T\hookrightarrow P$ extends to a homomorphism $\phi:G\to P$; so, it will suffice to show that $\phi$'s kernel, which is the first Ulm subgroup, denoting it by $\widetilde G:=\{x\in G: \val x_p\geq \omega$ for all primes $p\}$, is divisible.

To that purpose, let $p$ be some arbitrary prime and suppose that $p^k T_p=0$. If $x\in \widetilde G$, then there is a $y\in G$ such that $\val y_p>k$ and $py=x$. If $\val y_p<\omega$, then we can find a $z\in G$ such that $\val y<\val z_p$ and $pz=x$. But this would imply that $y-z$ is a non-zero element of $T_p$ of height $\val y_p>k$, which cannot be since $p^k T_p=0$. So, $\val y_p\geq \omega$. Since multiplication by $p$ preserves all $q$-heights (where $q\ne p$ is another prime), we can infer that $\val y_q=\val x_q\geq \omega$. Therefore, $y\in \widetilde G$ so that $\widetilde G$ is $p$-divisible. Since $p$ was arbitrary, it follows that $\widetilde G$ is divisible; but, since $G$ is reduced, we can conclude that $\widetilde G=0$, as required.
\end{proof}

The following conclusion then comes from Propositions~\ref{finite} and~\ref{spgroups}.

\begin{corollary}
Suppose $G$ is a reduced SH group. Then $G$ is an sp-group iff $G/T$ is divisible.
\end{corollary}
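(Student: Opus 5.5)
This is a direct combination of Proposition~\ref{finite} and Proposition~\ref{spgroups}, so the plan is simply to check that the hypotheses of the latter are met.

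First, since $G$ is SH, Proposition~\ref{finite} gives that each $p$-torsion subgroup $T_p$ of $G$ is finite. In particular, each $T_p$ is bounded, since a finite $p$-group is annihilated by $p^k$ where $p^k$ is its order. Hence $G$ is a reduced group with bounded $p$-torsion, which is exactly the hypothesis of Proposition~\ref{spgroups}.

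Applying Proposition~\ref{spgroups} then yields that $G$ is an sp-group iff $G/T$ is divisible. Both directions come from that proposition at once. Necessity is immediate from the definition of sp-groups, since $G'/T$ divisible with $G\cong G'$ gives $G/T$ divisible. Sufficiency is the substantive part of Proposition~\ref{spgroups}, which we may assume: there the inclusion $T\hookrightarrow\prod_p T_p$ extends to $G$, and its kernel, the first Ulm subgroup, is shown to be divisible and hence zero.

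There is no real obstacle. The only point to verify is the passage from ``finite'' to ``bounded'' for each $T_p$. Note that reducedness of $G$ is a standing hypothesis of the statement and is not needed for Proposition~\ref{finite} itself.
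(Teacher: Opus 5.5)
Your proposal is correct and is essentially the paper's own argument. The paper likewise just combines Proposition~\ref{finite} (each $T_p$ is finite, hence bounded) with Proposition~\ref{spgroups} (for a reduced group with bounded $p$-torsion, being an sp-group is equivalent to $G/T$ being divisible).
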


By \cite[Proposition~2.3]{CDK}, if $G$ is a reduced Sco-H group, then $G$ must be an sp-group and each $T_p$ must be finite (and hence bounded). So, when discussing when a group with the SH property has the Sco-H one, or visa versa, there is no loss of generality in assuming that we are working with reduced sp-groups.

\medskip

We will use the following helpful observation more than once.

\begin{proposition}\label{spprop}
Suppose $G$ is an sp-group with bounded $p$-torsion. If $\gamma:G\to G$ is an endomorphism, then $G':=\gamma(G)$ is an sp-group with bounded $p$-torsion $G'\cap T=\gamma(T)$.
\end{proposition}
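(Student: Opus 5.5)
We must show three things about $G'=\gamma(G)$: its torsion subgroup $T'=G'\cap T$ equals $\gamma(T)$, each $p$-component of $T'$ is bounded, and $G'/T'$ is divisible. By Proposition~\ref{spgroups}, for a reduced group with bounded $p$-torsion, the last condition is equivalent to $G'$ being an sp-group. Since the sp-property in the paper's sense (an embedding over $T$ into $\prod_p T_p$ with divisible cokernel) is what is really wanted, we first verify the divisibility of the quotient and then the embedding.

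First, boundedness is immediate: $T'_p\le T_p$, and $T_p$ is bounded. For the equality $T'=\gamma(T)$, clearly $\gamma(T)\le G'\cap T$. Conversely, suppose $\gamma(x)\in T_p$ with $p^k\gamma(x)=0$, and write $T_p=G[p^e]$. Since $G/T$ is divisible and $T_p$ is bounded, it is a summand, so $G=T_p\oplus H_p$ with $H_p$ having no $p$-torsion. Decompose $x=t+h$ with $t\in T_p$ and $h\in H_p$. Then $\gamma(h)=\gamma(x)-\gamma(t)\in T_p$.

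The key point is that $\gamma$ maps $H_p$ into $T_p$ only through elements of large height. Indeed, $H_p$ is $p$-divisible modulo torsion of order prime to $p$, since $H_p\cong G/T_p$ and $G/T$ is divisible. Hence $h=p^{e}h'+s$ with $s$ torsion of order prime to $p$. Then
\[
\gamma(h)=p^e\gamma(h')+\gamma(s)\in T_p .
\]
Projecting onto $T_p$ kills both terms: $p^eT_p=0$, and $\gamma(s)$ has order prime to $p$, so its $T_p$-component is $0$. Since $\gamma(h)$ already lies in $T_p$, it follows that $\gamma(h)=0$. Therefore $\gamma(x)=\gamma(t)\in\gamma(T)$. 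Summing over $p$ gives $G'\cap T=\gamma(T)$.

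Divisibility of $G'/T'$ follows because $\gamma$ induces a surjection $G/T\to G'/\gamma(T)=G'/T'$, and a quotient of a divisible group is divisible.

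Finally, for the sp-embedding, note that $G'$ is reduced if $G$ is reduced. This holds because the reduced sp-group $G$ sits purely in $P$, which is reduced, and a subgroup of a reduced group is reduced. Proposition~\ref{spgroups} then applies to give that $G'$ is an sp-group. If $G$ is not assumed reduced, we instead split off the divisible part and apply the same reasoning to the reduced part, using the definition directly. The main obstacle is the height argument showing that $\gamma(h)\in T_p$ forces $\gamma(h)=0$; everything else is formal.
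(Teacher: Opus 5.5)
Your proof is correct, and its outline is the paper's: $G'/\gamma(T)$ is divisible as an epimorphic image of $G/T$, one proves $G'\cap T=\gamma(T)$, and Proposition~\ref{spgroups} finishes. You are in fact more explicit than the paper about why $G'$ is reduced, which that proposition requires.

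The difference is in the middle step. The paper works in $G/T'$ with $T'=\gamma(T)$: $(G'\cap T)/T'$ is the torsion subgroup of the divisible group $G'/T'$, hence divisible. Yet it lies in $T/T'$, whose $p$-components are bounded, so it is $0$. That is one line, with no splitting and no prime-by-prime bookkeeping.

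You instead split $G=T_p\oplus H_p$ and project to show $\gamma(H_p)\cap T_p=0$. This is more hands-on but gives a little more. Since the torsion of $H_p$ has order prime to $p$, and $H_p$ modulo its torsion is isomorphic to $G/T$, the group $H_p$ is $p$-divisible outright. So the projection of $\gamma(H_p)$ into the bounded group $T_p$ vanishes, and $\gamma(H_p)\le H_p$. Noticing this would shorten your computation.

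Three small points:
\begin{itemize}
\item $T_p$ is a summand simply because it is a bounded pure subgroup; divisibility of $G/T$ plays no role there.
\item When summing over $p$, note that the $p$-components of a torsion $y\in G'$ are integer multiples of $y$, so they lie in $G'\cap T_p$.
\item Your closing hedge about non-reduced $G$ is vacuous. As you yourself observe, an sp-group with bounded $p$-torsion embeds in the reduced group $\prod_p T_p$.
\end{itemize}
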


\begin{proof} Put $T':=\gamma(T)$; since $G'/T'$ is an epimorphic image of $G/T$, it follows that $G'/T'$ is divisible. Manifestly, $T'\leq T$ so that $T/T'$ is the torsion of $G/T'$. Since $T$ has bounded $p$-torsion, so does $T/T'$ and, in particular, $T/T'$ will be reduced. This forces that $(G'/T')\cap (T/T')=0$, so that $G'/T'$ will be torsion-free (and divisible). This yields that $G'\cap T=T'$, and thanks to Proposition~\ref{spgroups}, $G'$ will be an sp-group, as stated.
\end{proof}

We now want to discuss which reduced groups that are (uniformly) SH are also (uniformly) Sco-H, and visa versa. The next result shows that one implication is generally valid.

\begin{theorem}\label{general}
Suppose $G$ is a reduced group. If $G$ is Sco-H (or uniformly Sco-H), then $G$ is SH (or, respectively, uniformly SH).
\end{theorem}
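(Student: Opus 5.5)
The plan is to show that an Sco-H exponent is automatically an SH-exponent. Fix an endomorphism $\phi:G\to G$ and choose $n$ with $G^n=G^{m}$ for all $m\geq n$; in the uniform case $n$ does not depend on $\phi$. It will suffice to show that $\phi$ restricts to an injection on $G^n$. Then $n$ is an SH-exponent for $\phi$, which gives both the SH and the uniformly SH statements at once.

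\emph{Step 1: reductions.} By \cite[Proposition~2.3]{CDK}, $G$ is an sp-group and each $T_p$ is finite. By Proposition~\ref{spprop} applied to $\gamma=\phi^n$, the group $G^n$ is an sp-group with torsion $T':=G^n\cap T=\phi^n(T)$, and each $T'_p$ is finite. Applying Proposition~\ref{spprop} again with $\gamma=\phi^{n+1}$ gives $\phi^{n+1}(T)=G^{n+1}\cap T=G^n\cap T=T'$. Hence $\phi(T')=T'$, and in fact $\phi(T'_p)=T'_p$ for each $p$. Since $T'_p$ is finite, the restriction $\phi_p:=\phi\restriction T'_p$ is an automorphism. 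In particular, the kernel $K$ of $\phi\restriction G^n$ satisfies $K\cap T'=0$, so $K$ is torsion-free.

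\emph{Step 2: the embedding.} The subgroup $G^n\leq G$ is reduced and $G^n/T'$ is divisible. By Proposition~\ref{spgroups} and its proof, the natural map $\iota:G^n\to P':=\prod_p T'_p$ extending the inclusion of $T'$ is injective; this map is the $\Z$-adic completion map. Concretely, for each prime $p$, since $T'_p$ is bounded, say by $p^k$, and is a summand of $G^n$, the $p$-component of $\iota(x)$ is the projection of $x$ onto $T'_p$ along a complement containing $p^kG^n$.

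\emph{Step 3: the key identity, and the main obstacle.} I expect the main obstacle to be checking that
\[
\iota\circ\phi=\Phi\circ\iota \quad\text{on } G^n, \qquad\text{where } \Phi:=\prod_p\phi_p .
\]
I would verify this coordinatewise. Write $x=t+p^ky$ with $t\in T'_p$, $y\in G^n$, and $k$ large enough that $p^k$ bounds $T'_p$; this is possible because $G^n/T'$ is divisible and $T'=T'_p\oplus(\text{$p'$-part})$. Then $\phi(x)=\phi(t)+p^k\phi(y)$ with $\phi(t)\in T'_p$ and $\phi(y)\in G^n$. Hence the $p$-coordinate of $\iota(\phi x)$ is $\phi_p(t)$, which is exactly the $p$-coordinate of $\Phi(\iota x)$. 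This uses that the $p$-coordinate is well defined modulo $p^kG^n$, which holds because $T'_p\cap p^kG^n=p^kT'_p=0$.

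\emph{Conclusion.} If $x\in K$, then $\Phi(\iota x)=\iota(\phi x)=0$. Since $\Phi$ is an automorphism of $P'$, this forces $\iota x=0$, and since $\iota$ is injective, $x=0$. Thus $\phi$ is injective on $G^n$, as required.
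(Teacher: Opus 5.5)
Your proof is correct and follows essentially the same route as the paper. You pass to $G^n$ via \cite[Proposition~2.3]{CDK} and Proposition~\ref{spprop}, observe that $\phi$ is an automorphism of each finite $T^n_p$, and transfer injectivity through the embedding $G^n\hookrightarrow\prod_p T^n_p$. Your coordinatewise check of $\iota\circ\phi=\Phi\circ\iota$, using $G^n=T'_p\oplus p^kG^n$, supplies the detail that the paper compresses into ``they must agree on $G^n\leq P^n$''.
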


\begin{proof} Again \cite[Proposition~2.3]{CDK} implies that $G$ must be an sp-group and each $T_p$ must be finite (and hence bounded).

Suppose $\phi:G\to G$ is an endomorphism and $\hat \phi$ is its restriction to $T$. For each $n\in \N$, set $G^n := \phi^n(G)$ and $T^n := \phi^n(T)$. So, Proposition~\ref{spprop} allows us to conclude that each $G^n$ is an sp-group with torsion $T^n$. Since $G$ is assumed to be Sco-H, for some $n\in \N$ we can conclude that $\phi$ restricts to a surjection $G^n\to G^n=G^{n+1}$. This insures that $T^n=T^{n+1}=\phi(T^n)$. But, since each $T^n_p$ is finite, this ensures that $\hat \phi$ is actually an automorphism of $T^n$.

If we now set $P^n:=\prod_p T^n$, it follows that $\hat \phi$ extends to an automorphism $\gamma$ on $P^n$; in particular, this $\gamma$ is injective. Note that $\phi$ and $\gamma$ agree on $T^n$, so they must agree on $G^n\leq P^n$, which guarantees that $\phi$ is also injective on $G^n$, as required.
\end{proof}

So, our question reduces to attempting to describe which reduced groups that are (uniformly) SH are also (uniformly) Sco-H. Again, since every (uniformly) Sco-H group is an sp-group (compare with \cite{CDK}), we may assume that $G/T$ is divisible. The following illustrates one easy case where this is an equivalence.

\begin{proposition}\label{last}
Suppose $G$ is a reduced group of finite rank. Then, $G$ is Sco-H (or uniformly Sco-H) iff $G$ is SH (or, respectively, uniformly SH) and $G/T$ is divisible.
\end{proposition}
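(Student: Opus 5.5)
The forward direction is immediate: if $G$ is Sco-H (or uniformly Sco-H), then Theorem~\ref{general} gives that $G$ is SH (respectively, uniformly SH). Moreover, \cite[Proposition~2.3]{CDK} shows that $G$ is an sp-group with finite $T_p$, so Proposition~\ref{spgroups} gives that $G/T$ is divisible. All the work is therefore in the converse.

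For the converse, assume $G$ is reduced of finite rank $r$, SH, and $G/T$ is divisible. By Proposition~\ref{finite} each $T_p$ is finite, so Proposition~\ref{spgroups} makes $G$ an sp-group with bounded $p$-torsion. Fix an endomorphism $\phi$ and let $n$ be an SH-exponent for it, so $\phi$ restricts to an isomorphism $G^m\cong G^{m+1}$ for every $m\ge n$. By Proposition~\ref{spprop}, each $G^m$ is an sp-group with torsion $T^m=\phi^m(T)$. Since $\phi$ is injective on $T^n$ and each $T^n_p$ is finite, $\phi$ restricts to an automorphism of each $T^n_p$; hence $T^{m}=T^n$ for all $m\ge n$. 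Next I compare $G^{m}/T^n$ and $G^{m+1}/T^n$. Both are torsion-free divisible (Proposition~\ref{spprop}), and since $\phi|_{G^m}$ is injective they have the same finite rank. The second embeds in the first, and a divisible subgroup of a torsion-free divisible group of the same finite rank is the whole group (it is a summand with rank-zero, hence zero, complement). So $G^{m+1}+T^n=G^m$, and since $T^n\le G^{m+1}$ this gives $G^{m+1}=G^m$. Thus $n$ is also a Sco-H exponent for $\phi$. Because the same $n$ works in both directions, the uniform versions follow as well.

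The main obstacle is the rank comparison. One has to know that $G^m/T^m$ has finite rank and that rank is preserved under the isomorphism $G^m\cong G^{m+1}$. Finite rank holds because $G^m/T^m$ is an image of $G/T$, which has rank $r$. Preservation holds because an isomorphism maps torsion onto torsion, so it induces an isomorphism $G^m/T^m\cong G^{m+1}/T^{m+1}$. One should also double-check the claim that $T^m$ stabilizes. The SH exponent only gives injectivity of $\phi$ on $T^n$, but finiteness of each $T^n_p$ upgrades this to surjectivity on each $T^n_p$, and therefore on $T^n=\oplus_p T^n_p$.
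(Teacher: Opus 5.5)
Your proof is correct, but the converse takes a different route from the paper's.

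\textbf{The paper's route.} After fixing an SH-exponent $n$ for $\phi$, the paper observes that $G^n$ has finite $p$-torsion $T^n$ and that $G^n/T^n$ is divisible of finite rank. It then cites \cite[Theorem~2.6]{K} to conclude that $G^n$ is co-Bassian, hence co-Hopfian. So the injective restriction of $\phi$ to $G^n$ is onto.

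\textbf{Your route.} You prove the needed co-Hopfian property directly. Finiteness of each $T^n_p$ makes $\phi$ an automorphism of $T^n$, so $T^m=T^n$ for all $m\ge n$. Then $G^{m+1}/T^n\le G^m/T^n$ are isomorphic $\mathbb{Q}$-vector spaces of the same finite dimension, which forces equality.

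\textbf{What each approach buys.} Your argument is self-contained: it uses only Propositions~\ref{finite} and~\ref{spprop} plus linear algebra. It also shows exactly where finite rank enters, namely the dimension count. That is necessarily the step that fails for the infinite-rank group of Example~\ref{ingen}, where the torsion still stabilizes. Finally, it shows explicitly that any SH-exponent for $\phi$ is also a Sco-H exponent, which settles the uniform case with no extra work. The paper's version is shorter and places the result within the co-Bassian theory of \cite{K}, at the cost of depending on an external theorem.
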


\begin{proof} If $G$ is (uniformly) Sco-H, then we know it is an sp-group, so that $G/T$ is divisible. And by Theorem~\ref{general}, $G$ is (uniformly) SH.

Conversely, suppose $G$ is (uniformly) SH and $G/T$ is divisible. Referring to Propositions~\ref{finite} and~\ref{spgroups}, $G$ must be an sp-group. Let $\phi:G\to G$ be an endomorphism and, for all $n\in \N$, let $G^n=\phi^n(G)$. Since $G$ is SH, for some $n\in \N$ we can conclude that $\phi$ restricts to an injection $G^n\to G^n$. If, as usual, $T^n:=\phi^n(T)=G^n \cap T$, then each $T^n_p$ is finite and $G^n/T^n$ is divisible of finite rank. It now follows from \cite[Theorem~2.6]{K} that $G^n$ is co-Bassian, and hence co-Hopfian. Therefore, $\phi$ restricted to $G^n$ must actually be surjective, proving that $G$ will be (uniformly) SH, as expected.
\end{proof}

The following is obviously a restatement of Proposition~\ref{last}.

\begin{corollary}\label{obviously}
If $G$ is a reduced sp-group of finite rank, then $G$ is (uniformly) SH iff it is (uniformly) Sco-H.
\end{corollary}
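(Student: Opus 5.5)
This is a direct repackaging of Proposition~\ref{last}, so I will derive it from that result rather than reprove anything. Let $G$ be a reduced sp-group of finite rank, with torsion subgroup $T$. I first check that the hypotheses of Proposition~\ref{last} hold. $G$ is reduced and of finite rank by assumption. By the definition of sp-group, $G$ embeds in $P=\prod_p T_p$ with $G/T$ divisible, so in particular $G/T$ is divisible. With these in hand, Proposition~\ref{last} says that $G$ is (uniformly) Sco-H iff $G$ is (uniformly) SH and $G/T$ is divisible.

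For the forward direction, suppose $G$ is (uniformly) Sco-H. Then Theorem~\ref{general}, or equally the first half of Proposition~\ref{last}, gives that $G$ is (uniformly) SH.

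For the reverse direction, suppose $G$ is (uniformly) SH. Since $G/T$ is divisible, as noted above, the converse half of Proposition~\ref{last} applies and shows that $G$ is (uniformly) Sco-H.

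The only point needing care is the divisibility of $G/T$ for a general sp-group, where we are not told the $T_p$ are bounded. This comes straight from the definition: $G/T$ sits between $T$ and $P$, and $P/T$ divisible modulo the pure-embedding condition forces $G/T$ to be divisible. If one prefers to stay within the bounded case, note that an SH or Sco-H reduced group has finite $T_p$ (Proposition~\ref{finite} and \cite[Proposition~2.3]{CDK}). Proposition~\ref{spgroups} then applies directly in whichever direction is being proved. I expect no real obstacle here; the statement is just Proposition~\ref{last} with the divisibility hypothesis absorbed into the sp-group assumption.
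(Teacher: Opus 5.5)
Your proposal is correct and matches the paper, which simply presents this corollary as a restatement of Proposition~\ref{last}, with the divisibility of $G/T$ supplied by the definition of an sp-group. Your closing paragraph is unnecessary, since that divisibility is immediate from the definition, as the paper notes in the proof of Proposition~\ref{spgroups}; it also has a small slip, in that it is $G$, not $G/T$, that sits between $T$ and $P$.
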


The following important example shows that the torsion subgroup of a group with SH may not have that property.

\begin{example}\label{ptothep}
There is a reduced sp-group $G$ of rank 1 that is SH, but the torsion subgroup $T$ of $G$ is not SH.
\end{example}

\begin{proof} Basically, we can just let $G$ be the group from Example~2.15 of \cite{CDK}. That group was clearly reduced and had rank 1, so was of finite rank, and since it was an sp-group, the factor-group $G/T\cong \Q$ was divisible. Now, since in that work it was observed that $G$ is Sco-H, Theorem~\ref{general} or Proposition~\ref{last} or Corollary~\ref{obviously} all tell us that $G$ is also SH. And in view of Corollary~\ref{torcase}, the group $$T\cong \oplus_{n\in \N} \Z(p_n^n)$$ obviously fails to be SH, as asserted.
\end{proof}

We next aim to demonstrate that for reduced sp-groups of infinite rank, the class of SH groups is {\it properly} larger than the class of Sco-H groups. The following gives a simple way of constructing sp-groups that are uniformly SH.

\begin{proposition}\label{extending}
Suppose $G$ is a reduced sp-group. Then $G$ is uniformly SH iff $T$ is (uniformly) SH.
\end{proposition}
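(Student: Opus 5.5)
Necessity is Proposition~\ref{uniformly}: if $G$ is uniformly SH, then $T$ satisfies the conditions of Corollary~\ref{torcase}, so $T$ is SH and uniformly SH. The real work is sufficiency. Assume $T$ is SH. Because $G$ is reduced, $T$ is reduced, so Corollary~\ref{torcase} gives an $e\in\N$ with $|T_p|\le p^e$ for every prime $p$, and a single $e$ serving as an SH-exponent for every endomorphism of $T$. In particular each $T_p$ is finite, hence bounded. Since $G$ is a reduced sp-group, Proposition~\ref{spgroups} tells us $G/T$ is divisible and that $G$ embeds in $P=\prod_p T_p$ with $T\le G\le P$. The aim is to show that $e$ (or $e$ plus a small constant) is a uniform SH-exponent for $G$.

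Fix an endomorphism $\phi$ of $G$ and let $\hat\phi$ be its restriction to the fully invariant subgroup $T$. The restriction to each finite $T_p$ is $\phi_p$, and $e$ is an SH-exponent for $\phi_p$. Since $T_p$ is finite and $\phi_p$ is injective on $\phi_p^e(T_p)$, it is in fact an automorphism of $T_p^e:=\phi^e(T_p)$. Put $T^e=\oplus_p T_p^e=\phi^e(T)$. By Proposition~\ref{spprop}, $G^e=\phi^e(G)$ is an sp-group with torsion $T^e$, and $G^e/T^e$ is divisible.

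Next, copy the argument of Theorem~\ref{general}. The automorphisms $\phi_p|_{T_p^e}$ assemble into an automorphism $\gamma$ of $P^e:=\prod_p T_p^e$. We want $G^e\le P^e$ and $\phi|_{G^e}=\gamma|_{G^e}$. For the second point: $\phi$ is an endomorphism of $G\le P$, and the map $G\to T_p$ given by the $p$-coordinate is the projection onto the summand $T_p$ (recall $G=T_p\oplus H_p$ with $H_p=\{x: x_p=0\}$). Since $\phi$ preserves $T_p$ and $H_p$ is the kernel of that projection, $\phi(H_p)\le H_p$ should follow. Indeed $H_p=\bigcap_k p^kG$ modulo nothing: $H_p$ is the $p$-divisible part of $G$ up to $T_p$-components, i.e. $H_p=p^{k}G$ where $p^kT_p=0$. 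Since $p^kG$ is fully invariant, $\phi$ acts coordinatewise: $(\phi x)_p=\phi_p(x_p)$. For the first point, coordinatewise action gives $\phi^e(x)_p=\phi_p^e(x_p)\in T_p^e$, so $G^e\le P^e$. On $G^e$, then, $\phi$ agrees with the injective $\gamma$, and $e$ is an SH-exponent for $\phi$, independent of $\phi$. Therefore $G$ is uniformly SH.

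I expect the main obstacle to be the coordinatewise description: verifying $H_p=p^kG$, which amounts to $G=T_p\oplus p^kG$ for bounded finite $T_p$. This uses $G/T$ divisible: for $x\in G$ with $p$-coordinate $0$, divisibility of $G/T$ lets us write $x=p^k y+t$, and the components of $t$ away from $p$ can be absorbed because they are $p$-divisible in the finite groups $T_q$ for $q\ne p$. Everything else is formal.
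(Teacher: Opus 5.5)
Your proposal is correct and follows the same route as the paper. A uniform SH-exponent $e$ for $T$ makes $\phi$ an automorphism of each finite $T_p^e$; these assemble into an automorphism $\gamma$ of $P^e=\prod_p T_p^e$, and $\phi$ agrees with $\gamma$ on $G^e\le P^e$, while necessity comes from Proposition~\ref{uniformly}. The only difference is that you explicitly justify $G^e\le P^e$ and $\phi|_{G^e}=\gamma|_{G^e}$, via $G=T_p\oplus p^kG$ so that $\phi$ acts coordinatewise, whereas the paper simply asserts these facts.
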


\begin{proof}
We may assume $T\leq G \leq P:= \prod_p T_p$ and $G/T$ is torsion-free divisible.

Further, assume first that $T$ is SH, so that it is uniformly SH in virtue of Corollary~\ref{torcase}. Let $n\in \N$ be an SH-index for all endomorphisms of $T$. To verify that $G$ is uniformly SH, suppose that $\phi:G\to G$ is an endomorphism; let $\hat \phi$ be its restriction to $T$. It thus follows that $\hat \phi$ is injective on $T^n:=\phi^n(T)$. Referring to Proposition~\ref{spprop}, if $G^n=\phi^n (G)$, then $G^n$ is also an sp-group. However, Proposition~\ref{finite} allows us to derive that each $T_p$ is finite, so that each $T^n_p$ is finite too. This assures that $\hat \phi$ is actually an automorphism of $T^n$.

Furthermore, as in the proof of Proposition~\ref{last}, if we let $P^n:=\prod_p T_p^n$, it follows that $\hat \phi$ extends to an automorphism $\gamma$ on $P^n$; in particular, this $\gamma$ is injective. Note that $\phi$ and $\gamma$ must agree on $G^n\leq P^n$, which means that $\phi$ is also injective on $G^n$, as required.

The converse follows directly from Proposition~\ref{uniformly}.
\end{proof}

The next construction substantiates our claim above:

\begin{example}\label{ingen} There is a reduced sp-group $G$ that is SH, but not Sco-H.
\end{example}

\begin{proof} In \cite[Example~20]{CK} an sp-group $G$ was constructed such that, for each prime $p$, $T_p$ was either 0 or isomorphic to $\Z(p)$. In particular, looking at Proposition~\ref{extending}, $G$ will be SH. On the other hand, it was observed there that this $G$ fails to be co-Hopfian, so that it also fails to be Sco-H, as required.
\end{proof}

We conclude this section with a couple of results on direct sums of (uniformly) SH groups.

\begin{proposition}
Suppose $G=H\oplus K$ is a reduced sp-group. Then $G$ is uniformly SH iff $H$ and $K$ are uniformly SH.
\end{proposition}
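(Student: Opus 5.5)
The plan is to reduce everything to Proposition~\ref{extending}, which says that for a reduced sp-group, being uniformly SH is equivalent to the torsion subgroup being SH. With that result in hand, the problem becomes one about torsion subgroups only, and Corollary~\ref{torcase} describes those numerically.

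\emph{Forward direction.} Suppose $G$ is uniformly SH. A uniform SH-exponent for $G$ is also one for any summand. Indeed, if $\psi$ is an endomorphism of $H$, extend it by $0$ on $K$ to get an endomorphism $\phi$ of $G$. Then $\phi^m$ restricted to $H$ is $\psi^m$, so the kernel chains of $\phi$ and $\psi$ agree, and any SH-exponent for $\phi$ serves for $\psi$. Hence $H$ and $K$ are uniformly SH. This direction does not use the sp-hypothesis at all.

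\emph{Reverse direction.} Suppose $H$ and $K$ are uniformly SH. By Corollary~\ref{torsum}, the torsion $T=T_H\oplus T_K$ of $G$ is SH. Since $G$ is a reduced sp-group by hypothesis, Proposition~\ref{extending} then gives that $G$ is uniformly SH.

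For readers who prefer not to rely on Corollary~\ref{torsum} as a black box, its content is as follows. Proposition~\ref{uniformly} shows that $T_H$ and $T_K$ are SH, so Corollary~\ref{torcase}(5) gives exponents $e_H,e_K$ with $|(T_H)_p|\le p^{e_H}$ and $|(T_K)_p|\le p^{e_K}$ for every prime $p$. Therefore $|T_p|\le p^{e_H+e_K}$. Since $T$ is reduced, Corollary~\ref{torcase} again shows that $T$ is SH.

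There is essentially no obstacle here. The only care needed is to confirm that the hypotheses of Proposition~\ref{extending} hold, namely that $G$ itself is a reduced sp-group, which is exactly what the statement assumes. Consequently we never need to check whether $H$ and $K$ are individually sp-groups.
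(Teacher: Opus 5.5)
Your proposal is correct and follows the paper's proof: necessity because a uniform SH-exponent passes to summands, and sufficiency by Corollary~\ref{torsum} (so $T$ is SH) followed by Proposition~\ref{extending} applied to the reduced sp-group $G$. Your explicit check of the summand claim and your unpacking of Corollary~\ref{torsum} through Corollary~\ref{torcase}(5) are both accurate.
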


\begin{proof}
Necessity being clear, suppose $H$ are $K$ are uniformly SH. By Corollary~\ref{torsum}, $T$ will be SH. So, by Proposition~\ref{extending}, $G$ is uniformly SH.
\end{proof}

\begin{proposition}
Suppose $G=H\oplus K$, where $K$ is finite (and hence torsion). Then $G$ is SH iff $H$ is SH.
\end{proposition}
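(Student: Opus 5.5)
Necessity is the easy direction: SH groups are closed under summands, as noted in the proof of Corollary~\ref{summandinv}. For sufficiency, assume $H$ is SH and $K$ is finite. I will show that any endomorphism $\phi:G\to G$ has an SH-exponent, and the natural route is Lemma~\ref{SH-exponent}. The subgroup $K$ itself need not be fully invariant, so I replace it by a finite fully invariant subgroup. Choose $k$ with $p^k K_p=0$ for every prime $p$ dividing $|K|$, and let $F:=\oplus_{p\mid |K|} G[p^k]=\oplus_{p\mid |K|}(T_p\cap G[p^k])$. This $F$ is fully invariant in $G$. Since $G[p^k]=H[p^k]\oplus K[p^k]$ and $H$ is SH, Proposition~\ref{finite} shows that each $H[p^k]$ is finite, and hence $F$ is finite. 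Finite groups are clearly SH: the chain $\ker\phi^n$ consists of subgroups of a finite group and so stabilizes. So the restriction of $\phi$ to $F$ has an SH-exponent $n_1$.

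By Lemma~\ref{SH-exponent}, it then suffices to show that $G/F$ is SH, with an SH-exponent $n_2$ for the induced map $\ov\phi$. Now
\[
G/F\cong H/(F\cap H)\oplus K/(F\cap K)=H/F_H,
\]
where $F_H:=\oplus_{p\mid |K|}H[p^k]$, because $K\le F$ by the choice of $k$. So everything reduces to the following claim: if $H$ is SH and $F_H$ is a finite fully invariant subgroup of $H$, then $H/F_H$ is SH.

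This claim is the main obstacle, since quotients of SH groups need not be SH in general. For finite $F_H$ I would attack it through the map $H\to H/F_H$ together with multiplication by $N:=|F_H|$. The endomorphism $x\mapsto Nx$ of $H$ has kernel containing $F_H$, so it factors as $H/F_H\to NH$ via $\bar x\mapsto Nx$. The kernel of this factored map is $H[N]/F_H$, which is finite.

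Given $\psi$ on $H/F_H$, I would try to lift $N\psi$ to an endomorphism of $H$. The idea is to compose $H\to H/F_H\xrightarrow{\psi} H/F_H\xrightarrow{\cdot N} NH\le H$, which gives an endomorphism $\theta$ of $H$. Then I would compare the kernels of $\psi^m$ with those of $\theta^m$ modulo the finite subgroup $H[N^m]$. The worry is that the bounds grow with $m$, and this is exactly where I expect difficulty.

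A cleaner alternative is to avoid the quotient altogether. Write $H=H'\oplus B$, where $B=\oplus_{p\mid|K|}T_p(H)$ is finite, as in the proof of Proposition~\ref{uniformly}. Then
\[
G=H'\oplus(B\oplus K),
\]
with $B\oplus K$ finite and $H'$ having no $p$-torsion for $p\mid |K|$. So it suffices to handle $G=H'\oplus E$ with $E$ finite and $H'$ free of $E$-relevant torsion. In that case $\mathrm{Hom}(E,H')=0$, so $E$ is fully invariant. Corollary~\ref{invariant} then needs $G/E\cong H'$ to be SH, and it is, as a summand of $H$. Since $E$ is finite and hence SH, this finishes the proof. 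I would adopt this second route as the actual proof.
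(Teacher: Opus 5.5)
Your second route is correct and is essentially the paper's proof: split off the finite torsion $B$ of $H$ at the relevant primes, observe that $E=B\oplus K$ is fully invariant in $G=H'\oplus E$ because $\mathrm{Hom}(E,H')=0$, and conclude via Corollary~\ref{invariant} (the paper cites Corollary~\ref{summandinv}, which rests on the same fact). The only difference is cosmetic: the paper first reduces by induction to $K$ being a $p$-group, whereas you treat all primes dividing $|K|$ at once, and your abandoned first route through $G/F$ is not needed.
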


\begin{proof}
Necessity again being clear, suppose that $H$ is SH (and $K$ is finite). An elementary induction shows that it suffices to assume that $K$ is a $p$-group for some prime $p$. If $S$ is the torsion subgroup of $H$, then $S_p$ will also be finite, and hence a summand of $H$. Suppose $H=H'\oplus S_p$, so that $G=H'\oplus S_p\oplus K$. Now, $S_p\oplus K$ will be finite, so trivially SH.  In addition, the $p$-torsion of $G$, $S_p\oplus K$, is fully invariant in $G$. As $H'$ is a summand of $H$, and hence SH, by Corollary~\ref{summandinv}, $G$ will also be SH.
\end{proof}

\section{SH groups and (global) Warfield groups}

Recall that a group is  \textit{Warfield} if it is a summand of a simply presented group. In particular, it is clear that a summand of a Warfield group will be Warfield. In addition, as any torsion-free rank 1 group clearly has a simple presentation,  any totally decomposable torsion-free group is simply presented, and hence Warfield.

A perhaps more useful approach to Warfield groups involves the notion of a \textit{nice decomposition basis}, i.e., a subset $\{z_i\}_{i\in I}\subseteq G$ such that $Z=\oplus_{i\in I} \langle z_i\rangle$ is a free and nice subgroup of $G$ such that all $p$-height functions restricted to $Z$ respect the above decomposition. Then, $G$ is Warfield precisely when it has a nice decomposition base such that $G/Z$ is totally projective (see, for instance, \cite[Theorem~15.7.6]{Fu} as well as, for a more detailed information, we refer the interested reader to \cite{HR}). Fortunately, we are only concerned with Warfield groups with bounded $p$-torsion, where the deep theory of (global) Warfield groups is considerably more straightforward (compare with \cite{DK} as well).

\medskip

Recall that if $N\leq G$, $p$ is a prime and $\alpha$ is an ordinal, then
\[
              U_\alpha^p(N)=\{x\in N: \val x_p\geq \alpha \land \val {px}_p>\alpha+1\}
\]
is the \textit{$\alpha$th $p$-Ulm factor} of $N$ in $G$. There is a natural injection $U_\alpha^p (N)\to U_\alpha^p(G)$, and we say $N$ is \textit{tight} in $G$ if this is an isomorphism for all primes $p$ and ordinals $\alpha$ (compare also with \cite{DK}). This is equivalent to saying that all of the relative Ulm invariants of $N$ in $G$ are 0. Again, since we are only concerned with groups with bounded $p$-torsion, our ordinal $\alpha$ need only take on finite values.

\medskip

In what follows, let $\B$ be {\it the class of all groups of rank 1 with bounded $p$-torsion}. Notice the pretty simple observation, as shown below, that every $G\in \B$ is Warfield.

\medskip

The following statement is useful for our further discussions.

\begin{proposition}\label{warfdec}
If $G\in \B$, then $G$ is Warfield. In addition, if $G$ is an arbitrary Warfield group with bounded $p$-torsion, then either $G$ is torsion or $G=\oplus_{i\in I} G_i$, where each $G_i\in \B$.
\end{proposition}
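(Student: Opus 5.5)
For the first claim I would use the nice decomposition basis characterization of Warfield groups (\cite[Theorem~15.7.6]{Fu}). Let $G\in\B$ and pick an element $z\in G$ of infinite order, so $Z=\langle z\rangle$ is free of rank 1 and the height functions trivially respect the (one-term) decomposition. Two things then need checking.

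\begin{enumerate}
\item $G/Z$ is totally projective. Since $G$ has rank 1, $G/Z$ is torsion. Its $p$-component is an extension of $T_p$ (bounded) by a subgroup of $\Q/\Z$ localized at $p$. So it is a $p$-group of length at most $\omega+k$ at worst; in fact I expect it to be a direct sum of a bounded group and a quasi-cyclic or cyclic group, since $T_p$ is a bounded pure subgroup and hence a summand. It is then a direct sum of cyclics and quasi-cyclics, hence totally projective.
\item $Z$ is nice in $G$. Since $T_p$ is bounded, say $p^kT_p=0$, the $p$-heights of elements of $G$ are either finite or $\geq\omega$. Niceness of a cyclic subgroup amounts to the statement that each coset $x+Z$ contains an element of maximal $p$-height among finite ordinals. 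This follows because the heights in a coset modulo $\langle z\rangle$ are bounded (or attained) using boundedness of torsion: two elements of the coset differ by $mz$, and the standard argument from the proof of Proposition~\ref{spgroups} shows that a height cannot increase indefinitely without producing torsion of large height.
\end{enumerate}

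I expect verifying niceness cleanly to be the main technical point. If it gets awkward, I would instead replace $z$ by a suitable multiple $p$-locally, using that for all but finitely many relevant heights the choice does not matter.

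For the second claim, let $G$ be Warfield with bounded $p$-torsion and not torsion. I would take a nice decomposition basis $\{z_i\}_{i\in I}$ with $G/Z$ totally projective. Since each $T_p$ is bounded, it is a summand of $G$, and I would use this to split off torsion. The plan is to invoke the standard decomposition theorem for Warfield groups: a Warfield group is a direct sum of countably generated Warfield groups, and countably generated ones of finite rank decompose according to their decomposition basis (see \cite{HR} and \cite[Section~15.8]{Fu}). In the bounded $p$-torsion case this should reduce to showing that $G\cong\bigoplus_i G_i$, where $G_i$ is the "hull" of $\langle z_i\rangle$ plus a piece of torsion. Concretely, I would write $T=\bigoplus_p T_p$ with each $T_p$ a finite direct sum of cyclics and distribute the cyclic summands among the $G_i$. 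Because $I\neq\emptyset$, the leftover torsion can be absorbed into one $G_i$ without destroying rank 1 or bounded $p$-torsion. That is exactly why the statement needs the "either torsion" alternative.

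The key step is matching: two Warfield groups are isomorphic when they have equal Ulm invariants and equivalent decomposition bases (Warfield's uniqueness theorem). So I would construct $\bigoplus_i G_i$ with $G_i\in\B$ (Warfield by the first part) so that its decomposition basis is equivalent to $\{z_i\}$ and its relative Ulm invariants agree with those of $G$. Arranging the Ulm invariants, i.e.\ placing the torsion summands appropriately, is where I expect the real work to lie.
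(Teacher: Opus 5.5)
Your plan for the second claim follows the paper's architecture. You realize the heights of the $z_i$ inside a direct sum of groups from $\B$, match relative Ulm invariants, apply the extension form of Warfield's uniqueness theorem, and absorb the leftover torsion into one summand, which is where $I\neq\emptyset$ is used. But the step that carries the proof is missing, and the substitute you sketch does not work. You cannot build $G_i$ as a ``hull'' of $\langle z_i\rangle$ plus some cyclic summands of $T$ (which, incidentally, is only assumed bounded, not finite), for three reasons:
\begin{itemize}
\item The jumps in the height sequence of $z_i$ are caused by specific torsion attached to $z_i$. That torsion has to be built into $G_i$ together with $z_i$, not added as a separate summand.
\item A group in $\B$ need not split as torsion-free $\oplus$ torsion at all. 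The reduced sp-group of Example~\ref{ptothep} lies in $\B$ and has $G/T\cong\Q$.
\item The natural hull of $z_i$ in $G$, namely $H_i$ with $H_i/\langle z_i\rangle$ the torsion of $G/\langle z_i\rangle$, contains all of $T$. Summing the $H_i$ therefore produces far too much torsion.
\end{itemize}
The missing idea is a tight splitting. Since $G/H_i$ is torsion-free, $H_i$ is isotype in $G$, so $z_i$ has the same height matrix in $H_i$ as in $G$. Then \cite[Theorem~1]{K} gives $H_i=A_i\oplus T_i'$ with $\langle z_i\rangle$ tight in $A_i$. With these $A_i$, the relative Ulm invariants of $\oplus_i\langle z_i\rangle$ in $\oplus_i A_i$ vanish. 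You then only need a torsion group $S$ whose Ulm invariants are the relative Ulm invariants of $Z$ in $G$; it has bounded $p$-torsion because $G$ does. The identity on $Z$ then extends to $G\cong(\oplus_i A_i)\oplus S$. So the ``arranging of Ulm invariants'' you defer as the real work is exactly this splitting, and without it the decomposition is not proved.

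The first part also has two problems.
\begin{itemize}
\item The claim that $(G/Z)_p$ splits because ``$T_p$ is a bounded pure subgroup and hence a summand'' is false: the image of $T_p$ in $G/Z$ need not be pure. For $G=\Z a\oplus\Z(p)t$ and $z=pa+t$, one gets $G/Z\cong\Z(p^2)$, and $T_p$ maps onto $p(G/Z)$. The conclusion survives by another argument. Let $X=(G/Z)_p$ and let $B$ be the image of $T_p$, with $p^eB=0$. If $X/B$ is cyclic, $X$ is bounded. If $X/B$ is quasi-cyclic, each $x\in X$ can be written $p^ny+b$ for every $n$, so $p^eX\le p^\omega X$. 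Hence $p^eX$ is divisible and $X=p^eX\oplus R$ with $p^eR=0$. This is the paper's ``divisible plus bounded''.
\item Your niceness sketch is not an argument as written. With integer multiples the proposed mechanism fails: in $\Z\in\B$ with $z=2$, the coset $1+2\Z$ has unbounded $3$-heights and no element of maximal height, with no torsion involved. Niceness therefore has to be checked $p$-locally, in $G_{(p)}=T_p\oplus H$ with $\Z_{(p)}$-multiples of $z$. The paper simply quotes niceness of $Z$ as well known.
\end{itemize}
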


\begin{proof}
Let $G\in \B$. If $\langle z\rangle =Z\leq G$ is an infinite cyclic subgroup, then it is well known that $Z$ must be nice in $G$. For every prime $p$, the $p$-torsion-subgroup of $G/Z$ will be the direct sum of a divisible group and a bounded group, and hence totally projective. It, therefore, follows that $z$ alone constitutes a nice decomposition basis for $G$, proving that it is Warfield.

Suppose now that $G$ is any Warfield group with bounded $p$-torsion; we may clearly assume that $G$ is not torsion. Let $\{z_i\}_{i\in I}$ be a nice decomposition basis for $G$. For each $i\in I$, it is pretty clear that we can construct a group $A_i\in \B$ containing $Z_i:=\langle z_i\rangle$ such that the $p$-height functions on $A_i$, restricted to $Z_i$, agree with the original $p$-height functions from $G$ on $Z_i$, and for which $Z_i$ is a tight subgroup of $A_i$ (for example, if $Z_i\leq H\leq G$ is so that $H/Z_i$ is the torsion subgroup of $G/Z_i$, then $G/H$ is torsion-free and hence $H$ is isotype in $G$. Clearly, $T$ is also the torsion of $H$, which has rank 1. It now follows from
\cite[Theorem~1]{K} that $H=A_i\oplus T'$, where $Z_i$ is tight in $A_i$). Let $S$ be a torsion group with bounded $p$-torsion such that the Ulm invariants of $S$ all agree with the relative Ulm invariants of $Z=\oplus_{i\in I} \langle z_i\rangle$ in $G$. We have set things up so that the identity function $Z\to Z$ extends to an isomorphism $$G\to (\oplus_{i\in I} A_i)\oplus S,$$ giving our desired decomposition.
\end{proof}

If $G$ and $G'$ are in $\B$, we write $G\precsim G'$ if there is a homomorphism $\phi:G\to G'$ such that $\phi(G)$ is not contained in the torsion subgroup $T'$ of $G'$. Note that this last condition is equivalent to requiring that the induced homomorphism, $\ov \phi:G/T\to G'/T'$, is non-zero and, in fact, if it is non-zero, it plainly must be injective. In particular, this means that $\phi:G\to G'$ is witness to $G\precsim G'$ exactly when the kernel of $\phi$ is contained in $T$. In other language, we are requiring that $\phi$ is non-zero in the category {\it WALK} (see, for example, \cite[Section~15.8]{Fu}).

It follows that, if $G\precsim G'$, then the (ordinary) types $\tau$ and $\tau'$ of $G/T$ and $G'/T'$ satisfy $\tau\leq \tau'$. However, suppose $G=\Q$ and $G'$ is any reduced sp-group of rank 1 with bounded $p$-torsion. Then, $$G/T\cong G'/T'\cong \Q,$$ but, since $G'$ is reduced, there is no (non-zero) homomorphism $G\to G'$; so the converse does not hold.

\medskip

We will say now that $G$ and $G'$ have the same \textit{WALK-type} if $G\precsim G'\precsim G$, and when this happens, we write $G\approx G'$. It is fairly clear that $\approx$ is an equivalence relation on $\B$. If $G, G'\in \B$ and $G\approx G'$, it is also clear that the rank 1 torsion-free groups $G/T$ and $G'/T'$ have the same type (regarded in the usual sense).

If $G$ is a group and $x\in G$, then the \textit{height matrix} of $x$, written as ${\mathbb H}_G(x)$ has, for each prime $p$, a row consisting of the $p$-height sequence of $x$. If $ m\in \N$, then the equality $m{\mathbb H}_G(x)={\mathbb H}_G(mx)$ holds always.

\medskip

The following statement gives us a concrete description of when elements of $\B$ have the same WALK-type as defined above.

\begin{proposition}
Suppose $G, G'\in \B$ (so they have rank 1). Then, the following four statements are equivalent:

(1) $G\approx G'$.

(2) If $z\in G$ and $z'\in G'$ are non-torsion elements, then
\[
m{\mathbb H}_G(z) = m'{\mathbb H}_{G'}(z')
\]
for some $m, m'\in \N$.

(3) $G\cong A\oplus S$ and $G'\cong A'\oplus S'$, where $A\cong A'$ have rank 1 and tight infinite cyclic subgroups, say $Z$ and $Z'$ (so that both $S$, $S'$ are torsion).

(4) $G\oplus B\cong G'\oplus B'$ for some torsion groups $B$ and $B'$ with bounded $p$-torsion.
\end{proposition}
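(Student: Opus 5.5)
I will prove the cycle $(1)\Rightarrow(2)\Rightarrow(3)\Rightarrow(4)\Rightarrow(1)$.

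\emph{$(1)\Rightarrow(2)$.} Let $\phi:G\to G'$ and $\psi:G'\to G$ witness $G\precsim G'\precsim G$. Since $G'$ has rank 1, $\phi(z)$ is non-torsion, so there are nonzero integers with $a\phi(z)=b z'$; replacing $z$ by a suitable multiple, we may take $\phi(z)=k z'$ for some $k\in\N$. Homomorphisms do not decrease heights, so $\mathbb H_G(z)\leq \mathbb H_{G'}(kz')=k\mathbb H_{G'}(z')$. Symmetrically, $\psi$ gives $\mathbb H_{G'}(z')\leq \ell\,\mathbb H_G(z)$ for some $\ell\in\N$, after again passing to multiples, which is harmless since multiplying by an integer multiplies the height matrix. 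We thus have two height matrices each bounded by an integer multiple of the other. I must convert this quasi-equality into an honest equality $m\mathbb H_G(z)=m'\mathbb H_{G'}(z')$. This uses that $T_p$ is bounded. For a fixed prime $p$, a $p$-height sequence in a group with $p^kT_p=0$ has only finitely many gaps, each of length at most $k$. For almost all primes $p$, namely $p\nmid k\ell$, the two $p$-rows are equivalent in the usual sense, and multiplying by an integer does not affect them. For the finitely many remaining primes, or more carefully for all primes where the sequences differ, one must show they agree after a uniform shift. I expect this step to be the main obstacle. Bounded torsion forces the sequences to be eventually ``$\omega$-like'' or to increase by 1 eventually, so the rows agree after multiplication by a bounded power of $p$. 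The remaining difficulty is to show that only finitely many primes need adjusting. Here I would use that $p\nmid k\ell$ forces $\mathbb H_G(z)_p\le \mathbb H_{G'}(z')_p\le\mathbb H_G(z)_p$ coordinatewise, hence exact equality.

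\emph{$(2)\Rightarrow(3)$.} Replace $z,z'$ by $mz,m'z'$, so that the height matrices coincide. As in the proof of Proposition~\ref{warfdec}, Theorem~1 of \cite{K} applied to the rank one group gives $G=A\oplus S$ with $Z=\langle z\rangle$ tight in $A$, and likewise $G'=A'\oplus S'$. The groups $A$ and $A'$ are Warfield with one-element nice decomposition bases having equal height matrices and no relative Ulm invariants. By the uniqueness part of the Warfield classification, which is used in the same way in Proposition~\ref{warfdec}, the map $z\mapsto z'$ extends to an isomorphism $A\cong A'$.

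\emph{$(3)\Rightarrow(4)$.} Take $B=S'$ and $B'=S$. Then $G\oplus S'\cong A\oplus S\oplus S'\cong A'\oplus S'\oplus S\cong G'\oplus S$, and $S,S'$ are summands of $G,G'$, so they have bounded $p$-torsion.

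\emph{$(4)\Rightarrow(1)$.} Compose the inclusion $G\to G\oplus B$, the isomorphism $G\oplus B\cong G'\oplus B'$, and the projection onto $G'$. The kernel of this composite is torsion, since the image of a non-torsion element has a non-torsion component in $G'$, because $B'$ is torsion. This gives $G\precsim G'$, and $G'\precsim G$ follows symmetrically.
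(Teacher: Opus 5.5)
Your proposal is correct and follows the paper's proof essentially step for step. It uses the same cycle of implications and the same two-sided height-matrix bound. That bound gives exact equality of $p$-rows for primes not dividing $k\ell$, and agreement after a shift at the finitely many remaining primes, because each row is eventually $\infty$ or eventually increases by $1$. For (2)$\Rightarrow$(3) you use \cite[Theorem~1]{K} and the Warfield uniqueness theorem as the paper does, and the last two implications use the same constructions.

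One harmless slip: gaps in a $p$-height sequence need not have length at most $k$. For example, $z=t+p^{N-1}w$ in $\Z(p)\oplus\Z$ has $p$-heights $(0,N,N+1,\dots)$, and a jump to $\infty$ occurs in the example at the end of the paper. What $p^kT_p=0$ actually gives, and all you use, is that a gap can only occur at a finite height below $k$, so there are at most $k$ of them.
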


\begin{proof}
\noindent(1) $\Rightarrow$ (2): Suppose $\alpha:G\to G'$ and $\alpha': G'\to G$ bear witness to $G\precsim G'\precsim G$. Let $Z=\langle z\rangle \leq G$ and $Z':= \langle z'\rangle \leq G'$. Replacing $\alpha$ and $\alpha'$ by some multiples, we may assume $\alpha(Z)\leq Z'$ and $\alpha'(Z')\leq Z$. Suppose also $\alpha(z)=k'z'\in  Z'$ and $\alpha'(z')=kz\in Z$. It follows that
\[
{\mathbb H}_{G}(z)\leq  {\mathbb H}_{G'}(\alpha(z))= k'{\mathbb H}_{G'}(z'))\leq  k'{\mathbb H}_{G}(\alpha'(z'))
= kk'{\mathbb H}_{G}(z)).
\]
Observe that, if $p$ is not a divisor of $k$ or $k'$, then this implies that $z$ and $z'$ have the same sequence of $p$-heights. And for each prime divisor $p$ of either $k$ or $k'$, the $p$-height sequences of $z$ and $z'$ eventually look like either $(m, m+1, m+2, \dots)$ (where $m\in\N$) or $(\infty, \infty, \infty, \dots)$; in particular, after some initial segments, these $p$-height sequences must be equal. Since there are a finite number of such primes, it is readily seen that (2) is true.

\medskip

\noindent(2) $\Rightarrow$ (3): Replacing $z$ by $mz$ and $z'$ by $m'z$, there is no loss of generality in assuming that ${\mathbb H}_G(z) ={\mathbb H}_{G'}(z')$. It follows that $z\to z'$ extends to a $p$-height-preserving isomorphism $Z\to Z'$. It also follows from \cite[Theorem~1]{K} that $G=A\oplus S$, where $Z$ is tight in $A$ and, similarly, $G'\cong A'\oplus S'$. Since $Z$ and $Z'$ will be nice subgroups of $A$ and $A'$ respectively, all of whose relative Ulm invariants are 0, we can deduce that $Z\cong Z'$ extends to $A\cong A'$, as required.

\noindent(3) $\Rightarrow$ (4 ): Letting $B=S'$ and $B'=S$, we have
\[
G\oplus B=(A\oplus S)\oplus S'\cong (A'\oplus S')\oplus S=G'\oplus B'.
\]

\noindent(4) $\Rightarrow$ (1): It straightforwardly follows that the composition, say $\gamma$, of the natural homomorphisms
\[
G\to G\oplus B\cong G'\oplus B'\to G',
\]
 after modding out by their respective torsion subgroups, becomes a sequence of isomorphisms
\[
G/T\cong (G \oplus B)/(T\oplus B)\cong (G'\oplus B')/(T'\oplus B')\cong G'/T'.
\]
In particular, we cannot have $\gamma(G)\leq T'$, so that $G\precsim G'$. The reverse relation is analogous, ensuring the proof.
\end{proof}

The following criterion gives a complete description of the uniformly SH Warfield groups.

\begin{theorem}\label{warfieldchar}
Suppose $G=\oplus_{i\in I} G_i$ is a Warfield group with torsion subgroup $T$ such that each $G_i\in \B$, and let $T_i$ be the corresponding torsion subgroup of $G_i$ so that $T=\oplus_{i\in I} T_i$.

(1) If $G$ is SH, then there is an $n\in \N$ such that there is no subset $\{i_1, i_2, \dots, i_{n-1}, i_{n}\}$ of $I$ with
$n$-elements such that
\[
 G_{i_1}\precsim G_{i_2}\precsim  \cdots \precsim G_{i_{n-1}}\precsim G_{i_{n}}.
 \]

(2) $G$ is uniformly SH iff there is a positive integer $n$ as in (1) and $T$ is SH.
\end{theorem}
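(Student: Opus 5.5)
The plan is to prove (1) by contraposition, building an endomorphism whose kernel chain never stabilizes. Then (2) follows by combining (1) with Proposition~\ref{uniformly}, and for the converse we filter $G$ by fully invariant pieces and apply Lemma~\ref{SH-exponent} repeatedly.

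\emph{Part (1).} First note that $\precsim$ is transitive on $\B$. If $\alpha:G\to G'$ and $\beta:G'\to G''$ are witnesses, then $\ov\alpha$ and $\ov\beta$ are injective maps of rank-$1$ torsion-free groups, so $\ov{\beta\alpha}\neq 0$.

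Suppose that for every $n$ some $n$-element chain exists. We extract pairwise disjoint chains $C_1,C_2,\dots$ with $C_k$ of length $k+1$, as follows. Having chosen $C_1,\dots,C_{k-1}$, with finite union $F$, take a chain of length $N\ge (k+1)(|F|+1)$. Deleting the at most $|F|$ indices lying in $F$ splits it into at most $|F|+1$ consecutive segments, and each segment is still a chain. By pigeonhole one segment has length at least $k+1$; take $C_k$ inside it.

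For a chain $G_{i_1}\precsim\cdots\precsim G_{i_{k+1}}$, choose witnesses $\alpha_j:G_{i_j}\to G_{i_{j+1}}$. Define $\phi$ to be $\alpha_j$ on $G_{i_j}$ for $j\le k$, zero on $G_{i_{k+1}}$, and zero on all summands outside $\bigcup_k C_k$. Take any non-torsion $x\in G_{i_1}$. Each $\alpha_j$ has kernel inside the torsion, so $\phi^k(x)$ is non-torsion, hence nonzero, while $\phi^{k+1}(x)=0$. Thus $\ker\phi^k\neq\ker\phi^{k+1}$ for every $k$, and $G$ is not SH.

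\emph{Part (2), necessity.} If $G$ is uniformly SH, then it is SH, so (1) supplies $n$. Proposition~\ref{uniformly} gives that $T$ is SH.

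\emph{Part (2), sufficiency.} Suppose $T$ is SH. By Corollary~\ref{torcase} there is a uniform exponent $e$ for all endomorphisms of $T$. Since $T$ is fully invariant, Lemma~\ref{SH-exponent} reduces the problem to finding a uniform exponent for the induced map $\psi$ on $G/T=\oplus_i G_i/T_i$. This is a completely decomposable group of rank-$1$ summands.

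The component $\psi_{ji}:G_i/T_i\to G_j/T_j$ is induced by the corresponding component of $\phi$. So $\psi_{ji}\neq 0$ forces $G_i\precsim G_j$. The absence of $n$-chains of distinct indices gives two facts:
\begin{itemize}
\item every $\approx$-class in $I$ has fewer than $n$ elements;
\item every strictly increasing chain of $\approx$-classes has length less than $n$.
\end{itemize}

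Let $r(c)$ be the length of the longest strictly increasing chain of classes starting at the class $c$. Set $I_k=\{i: r([i])\le k\}$ and $H_k=\oplus_{i\in I_k}G_i/T_i$. Each $I_k$ is upward closed under $\precsim$, so each $H_k$ is $\psi$-invariant. I will verify that Lemma~\ref{SH-exponent} only uses $\phi(H)\le H$, not full invariance.

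In the quotient $H_k/H_{k-1}$, components between distinct classes with the same $r$-value vanish. Such classes are incomparable, since a strict comparison would change $r$. Hence $\psi$ acts blockwise on a direct sum of invariant torsion-free pieces, each of finite rank less than $n$. By (1.A) and (1.B), $n$ is an exponent for each layer. There are at most $n$ layers, so iterating Lemma~\ref{SH-exponent} gives the exponent $n^2$ for $\psi$, and $e+n^2$ for $\phi$.

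The main obstacle is this layering step. The function $r$ must be defined on $\approx$-classes rather than on indices, because $\precsim$ is only a preorder, and one has to check carefully that cross-class components within a layer really vanish.
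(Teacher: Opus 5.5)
Your proposal is correct and is essentially the paper's proof. In (1), your pairwise disjoint chains $C_k$ play the role of the rows of the paper's triangular array $\{G_{j,k}\}$, with the same ``shift along the chain and kill the top'' endomorphism. In (2), your layers $\{i: r([i])=k\}$ are exactly the paper's sets $I_k$ of successively maximal summands, and the blocks of rank less than $n$ are handled the same way by (1.A), (1.B) and Lemma~\ref{SH-exponent}. The only difference is cosmetic: you strip off $T$ first and then layer $G/T$ by merely $\psi$-invariant subgroups (rightly noting that Lemma~\ref{SH-exponent} needs only invariance), whereas the paper keeps $T$ inside each fully invariant $H_j=T+\oplus_{i\in K_j}G_i$.
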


\begin{proof}
Considering (1), we argue by contrapositive, so assume the ordering condition fails. Put $$L:=\{(j,k)\in \N^2: j\leq k\}.$$ We claim that there is a subset $\{G_{j,k}: (j,k)\in L\}$ of (distinct) elements of $\{G_i:i\in I\}$ such that, if $1\leq j<j'\le k$, then $G_{j,k}\precsim G_{j',k}$: To begin, let $G_{1,1}$ be arbitrary. Suppose $k>1$ and we have chosen $G_{j,k'}$ for all $(j,k')\in L$ with $k'<k$ satisfying our conditions. By our hypothesis, we can find $$n=k(k+1)/2=1+2+\cdots +k$$ elements of the $G$s that are totally ordered by the operation $\precsim$. Evidently, at least $k$ of these elements do not appear in the list of $G_{j, k'}$ with $k'<k$ chosen previously. Labeling these $k$ elements by $(j,k)$ with $1\leq j\leq k$ in ascending order using $\precsim$ completes our inductive definition.

Define $\phi:G\to G$ as follows: If $(j,k)\in L$ with $j<k$, let $\phi$ on $G_{j,k}$ be a homomorphism that bears witness to $G_{j,k}\precsim G_{j+1,k}$. We also define $\phi$ to be 0 on each $G_{k,k}$, as well as on any $G_i$, where $i$ is an index that does not appear in the list $G_{j,k}$ for $(j,k)\in L$.

Furthermore, for any $k$, let $z_k\in G_{1,k}$ be a non-torsion element. It is readily seen that $z_k^{k-1}\ne 0$, but $z_k^k=0$. This gives that $G$ is not SH, as required.

\medskip 

For point (2), the implication $(\Rightarrow)$ follows from (1) and Proposition~\ref{uniformly}, so assume $T$ is SH and $n$ is as in (1). As $T$ will also be uniformly SH, there is an $m\in \N$ which is a uniform SH-exponent for $T$.

Clearly, the existence of such an $n$ implies that, if $I'\subseteq I$ is non-empty, then $\{G_i: i\in I'\}$ will have a maximal element under the relation $\precsim$.

Let $I_1\subseteq I$ be the collection of indices of those summands that are maximal with respect to $\precsim$. Next, let $I_2\subseteq I\setminus I_1$ be the collection of indices of the maximal elements of the remaining groups, again under the relation $\precsim$. Note that, if $i\in I_2$, then for some $i'\in I_1$ we must have $G_{i}\prec G_{i'}$. Continuing to define $I_3, I_4, \dots$ in this way, the last sentence tells us that it must terminate after at most $n$ stages, i.e., $I$ will be the disjoint union $I_1\cup \cdots \cup I_n$.

Apparently, defining $i\sim i'$ to mean $G_i \approx G_{i'}$ gives an equivalence relation on $I$, and each equivalence class is contained in precisely one $I_j$ and has at most $n$ elements. Letting $$\mathcal I_j:=\{[i] ~ | ~ i\in I_j\}$$ be the corresponding partition of $I_j$ into equivalence classes, again each such equivalence class in $\mathcal I_j$ will have at most $n$ elements.

Let $K_0:=\emptyset$ and, for $j=1, \dots, n$, let $$K_j:=I_1\cup \cdots \cup I_j.$$ And for $j=0, 1, \dots, n$, let $$H_j:=T+\oplus_{i\in K_j} G_i;$$ so, in particular, $H_0=T$. Obviously, $H_0=T$ is fully invariant in $G$, and we assert that, if $j=1, 2, \dots, n$, then $H_j$ is, as well: This basically follows since if $j<j'$, and $i\in I_j$ and $i'\in I_{j'}$, then we cannot have that $G_{i}\precsim G_{i'}$, so that any homomorphism $\sigma:G_i\to G_{i'}$ must satisfy $\sigma (G_i)\leq T_{i'}$, i.e., $\sigma$ is 0 in the category WALK.

We next prove by induction, for $j=0, 1, \dots, n$, that $H_j$ has the number $m+jn$ as a uniform SH-exponent. This is true by definition for $j=0$, so assume it holds for $j-1<n$. So, let $\phi$ be any endomorphism of $H_j$. Note that $\phi$ induces an endomorphism $$\ov \phi:\ov H:= H_j/H_{j-1}\to \ov H$$ and, in view of Lemma~\ref{SH-exponent}, it suffices to show that $n$ is an SH-exponent for $\ov \phi$.

Note that $\ov H \cong \oplus_{\alpha\in \mathcal I_j} C_\alpha$, where each $C_\alpha=\oplus_{i\in \alpha} (G_i/T_i)$ is torsion-free of rank at most $n$. If $\alpha, \alpha'$ are distinct elements of $I_j$, and $i\in \alpha$, $i'\in \alpha'$, then $G_i$ and $G_{i'}$ are not comparable via the operation $\precsim$, meaning that if $\gamma:G_i\to G_{i'}$, then we must have that $\gamma(G_i)\subseteq T_{i'}$. It thereby follows that $\ov \phi(C_\alpha)\leq C_{\alpha}$ for each $\alpha\in \mathcal I_j$. Exploiting now point (1.A) quoted above, this restriction has SH-exponent $n$, so by point (1.B) alluded above, the map $\ov \phi$ also has SH-exponent $n$, thus finishing the proof.
\end{proof}

We now note some variations on the last result.

\begin{corollary}\label{WarWalk}
Suppose $G =\oplus_{i\in I} G_i$ is a Warfield group, where each $G_i\in \B$ and $T$ is SH. Then the following are equivalent:

(1) $G$ is uniformly SH;

(2) $G$ is SH;

(3) There is an $n\in \N$ such that there is  no subset $\{i_1, i_2, \dots, i_{n-1}, i_{n}\}$ of $I$ with
$n$-elements such that
\[
 G_{i_1}\precsim G_{i_2}\precsim  \cdots \precsim G_{i_{n-1}}\precsim G_{i_{n}}.
 \]
\end{corollary}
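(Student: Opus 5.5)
The plan is to prove the cycle of implications $(1)\Rightarrow(2)\Rightarrow(3)\Rightarrow(1)$, each step reducing directly to results already established.

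The implication $(1)\Rightarrow(2)$ needs no work: as noted in the introduction, a uniformly SH group is SH, since a uniform SH-exponent is in particular an SH-exponent for every endomorphism.

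For $(2)\Rightarrow(3)$, we will invoke Theorem~\ref{warfieldchar}(1) verbatim. That result was proved for an arbitrary decomposition $G=\oplus_{i\in I}G_i$ with each $G_i\in\B$, and without any hypothesis on $T$. Its conclusion is exactly the existence of the integer $n$ described in (3).

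For $(3)\Rightarrow(1)$, we apply Theorem~\ref{warfieldchar}(2). Its hypotheses are the existence of $n$ as in (1), which is precisely our condition (3), together with $T$ being SH, which is a standing assumption of the corollary. The theorem then yields that $G$ is uniformly SH.

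The only point requiring care is to confirm that nothing in Theorem~\ref{warfieldchar} used additional hypotheses. For example, one should check that reducedness is not required. The only input there is that each $T_p$ is bounded, and this is built into the definition of $\B$. There is also one potential subtlety: the chain argument in part (1) of the theorem needs the $G_i$ to be indexed by distinct indices rather than to be distinct groups. This is how the theorem was set up, so nothing additional is required and there is no real obstacle.
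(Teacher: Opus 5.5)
Your proposal is correct and is exactly the paper's proof: $(1)\Rightarrow(2)$ is immediate, $(2)\Rightarrow(3)$ is Theorem~\ref{warfieldchar}(1), and $(3)\Rightarrow(1)$ is Theorem~\ref{warfieldchar}(2), using the standing hypothesis that $T$ is SH. Your extra checks are right as well: reducedness is never used in that theorem, and the chain argument only requires distinct indices.
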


\begin{proof}
(1)$\Rightarrow$(2) is obvious, (2)$\Rightarrow$(3) is by Theorem~\ref{warfieldchar}(1) and  (3)$\Rightarrow$(1)
is by Theorem~\ref{warfieldchar}(2).
\end{proof}

Again, in the torsion-free case, the Warfield groups agree with the completely decomposable groups, and what we have termed the WALK-type of a rank 1 torsion-free group $G$ agrees with its type, $\tau(G)$ in the usual sense. So, if $T=0$ in the last result (which is trivially SH), we then arrive at the following assertion, which is a restatement of the main result of \cite{CD}:

\begin{corollary}\label{compdectfg}
Suppose $G$ is a completely decomposable torsion-free group. Then the following are equivalent:

(1) $G$ is uniformly SH;

(2) $G$ is SH;

(3) There is an $n\in \N$ such that there is no subset $\{i_1, i_2, \dots, i_{n-1}, i_{n}\}$ of $I$ with
$n$-elements such that
\[
 \tau(G_{i_1})\leq \tau(G_{i_2})\leq  \cdots \leq \tau(G_{i_{n-1}})\leq \tau( G_{i_{n}}).
 \]
\end{corollary}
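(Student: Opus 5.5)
This is the special case $T=0$ of Corollary~\ref{WarWalk}. A completely decomposable group $G=\oplus_{i\in I} G_i$, with each $G_i$ torsion-free of rank $1$, is Warfield: each rank~1 torsion-free group has a simple presentation, and direct sums of simply presented groups are simply presented. Each $G_i$ lies in $\B$, since it has rank $1$ and trivially bounded ($=0$) $p$-torsion. The torsion subgroup is $T=0$, which is SH (indeed it satisfies condition (5) of Corollary~\ref{torcase} with any $e$). So Corollary~\ref{WarWalk} applies and gives the equivalence of (1), (2), and
\begin{quote}
(3$'$) there is an $n\in\N$ with no $n$-element subset $\{i_1,\dots,i_n\}\subseteq I$ such that $G_{i_1}\precsim\cdots\precsim G_{i_n}$.
\end{quote}

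What remains is to check that, for torsion-free rank~1 groups $A,B$, we have $A\precsim B$ iff $\tau(A)\le\tau(B)$. Then (3$'$) becomes (3) verbatim.

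For the forward direction, a witness $\phi:A\to B$ with $\phi(A)\not\le T_B=0$ is a nonzero homomorphism. It is injective, since its kernel is a pure subgroup of a rank~1 group not equal to $A$, hence $0$. So $\chi_A(a)\le\chi_B(\phi(a))$, giving $\tau(A)\le\tau(B)$. (This direction was also already noted in the paper, where $A/T=A$ and $B/T=B$.)

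For the converse, suppose $\tau(A)\le\tau(B)$. Pick $0\neq a\in A$ and $0\neq b\in B$. The characteristics $\chi(a)$ and $\chi(b)$ then satisfy $\chi(a)\le\chi(b)$ except at finitely many primes, and at those primes $\chi(a)$ is finite. Choose $m\in\N$ so that $\chi(a)\le\chi(mb)$; this works because multiplying $b$ by a suitable power of each exceptional prime raises the finite deficits. Then $a\mapsto mb$ extends to a homomorphism $A\to B$. This is the standard fact for rank~1 groups, since $A$ is the union of the cyclic groups generated by $a/k$ for the admissible $k$, each of which maps to the corresponding divisor of $mb$. The map is nonzero, hence witnesses $A\precsim B$.

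The main (and only slightly delicate) point is this converse direction, namely producing a nonzero map from the type inequality by adjusting $b$ by an integer multiple. It is classical (see \cite{F}), so I would cite it rather than reprove it in detail.
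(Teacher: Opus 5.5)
Your proposal is correct and follows the paper's route: specialize Corollary~\ref{WarWalk} to $T=0$, and observe that for torsion-free rank~1 groups the WALK relation $A\precsim B$ coincides with $\tau(A)\le\tau(B)$. The paper only asserts this identification. Your check of both directions is correct and fills in the standard detail: nonzero maps between rank~1 torsion-free groups are injective, and rescaling $b$ at the finitely many exceptional primes lets $a\mapsto mb$ extend to a homomorphism.
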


 We finish this section with an application on direct sums and summands of uniformly SH Warfield groups.

\begin{proposition}
Suppose $G=H\oplus K$ is a Warfield group. Then $G$ is uniformly SH iff $H$ and $K$ are both uniformly SH.
\end{proposition}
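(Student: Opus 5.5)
The forward direction needs no new ideas. Uniform SH passes to summands: if $K$ is a summand of $G$ and $\psi$ is an endomorphism of $K$, then $\psi$ extended by $0$ on the complement is an endomorphism of $G$. Any uniform SH-exponent for $G$ is therefore also one for $K$, and likewise for $H$.

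For the converse, assume $H$ and $K$ are uniformly SH. By Corollary~\ref{torsum} the torsion $T=T_H\oplus T_K$ of $G$ is SH. In particular each $T_p$ is finite, hence bounded, so $G$ is a Warfield group with bounded $p$-torsion. If $G$ is torsion we are done by Corollary~\ref{torcase}. Otherwise $H$ and $K$ are Warfield as summands of $G$, and each also has bounded $p$-torsion. By Proposition~\ref{warfdec}, each of $H$ and $K$ is either torsion or a direct sum of groups in $\B$.

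The torsion case needs a small adjustment. If, say, $K$ is torsion, then $K$ has finite $p$-components and is a summand. We can absorb it by writing one summand $H_{i_0}\in\B$ of $H$ as $H_{i_0}\oplus K\in\B$, which is still rank 1 with bounded $p$-torsion. This absorption does not alter the relation $\precsim$ among the summands. After this, we may write $G=\oplus_{i\in I}H_i\oplus\oplus_{j\in J}K_j$ with all summands in $\B$.

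By Theorem~\ref{warfieldchar}(1) applied to $H$ and to $K$, there are integers $n_H$ and $n_K$ bounding the length of $\precsim$-chains among the $H_i$ and among the $K_j$ respectively. Since $T$ is SH, Theorem~\ref{warfieldchar}(2) (equivalently Corollary~\ref{WarWalk}) reduces the problem to producing a single $n$ that bounds the length of $\precsim$-chains in the combined family.

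This chain bound is the main obstacle, because a chain in the combined family may alternate between the $H_i$ and the $K_j$. I would take $n=n_H+n_K$ and argue as follows. Let $G_{i_1}\precsim\cdots\precsim G_{i_N}$ be a chain of distinct summands. Deleting the $K$-terms leaves a subsequence of $H$-terms, and this subsequence is still a chain because $\precsim$ is transitive: composing two maps that are nonzero in WALK again gives a map not landing in torsion, since the induced rank-one maps on $G/T$ are injective. Hence the chain has at most $n_H-1$ terms from $H$, and similarly at most $n_K-1$ terms from $K$, so $N\le n_H+n_K-2<n$. Thus $n$ satisfies condition (3) of Corollary~\ref{WarWalk}, and together with $T$ being SH this shows that $G$ is uniformly SH.
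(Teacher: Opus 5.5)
Your proof is correct and follows the paper's argument. Corollary~\ref{torsum} gives that $T$ is SH, and then $n=n_H+n_K$ witnesses condition (3) of Corollary~\ref{WarWalk} for the combined decomposition of $G$ into groups from $\B$. Your transitivity-and-deletion argument supplies the check the paper only calls ``not too hard to verify'' (it even shows that chains have at most $n_H+n_K-2$ terms), and absorbing a torsion summand into some $H_{i_0}$ covers a case the paper passes over silently.
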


\begin{proof}
Necessity being obvious, suppose $H$ and $K$ are uniformly SH. By Corollary~\ref{torsum}, $T$ is SH.

According to \cite{HR}, both $H$ and $K$ are also Warfield groups.
Suppose $H=\oplus_{i\in I} H_i$ and $K=\oplus_{j\in J} K_j$, where each $H_i$ and $K_j$ are in $\B$. Suppose also $n_H$ and $n_K$ are as in the statement of Corollary~\ref{WarWalk}, witnessing that $H$ and $K$, respectively, are SH. If we let $n=n_H+n_K$, it is not too hard to verify that the condition of Corollary~\ref{WarWalk} is satisfied for $G$ using this value $n$, so that $G$ is also uniformly SH, as asked for.
\end{proof}

\section{More Warfield groups}

Deciding when a Warfield group is SH, as opposed to the uniformly SH Warfield groups described in Corollary~\ref{WarWalk}, appears to be a decidedly more difficult problem. The following non-trivial argument handles just one simple case of this question.

\begin{proposition}\label{warfieldrankone}
Suppose $G\in \B$ (is a Warfield group of rank 1) with a tight subgroup $Z=\langle z\rangle$ of rank 1 such that each $T_p$ is cyclic. For any prime $p$, let $T_p\cong \Z(p^{e_p})$ and if $e_p\ne 0$, let $j_p>0$ be the size of the single jump in the $p$-height sequence of $z$, and suppose that jump takes place between $\vv {p^{k_p-1}z}$ and $\vv {p^{k_p}z}$. So, the $p$-height sequence of $z$ satisfies
$$
           m_p:=\vv z< \cdots<m_p+k_p-1=\vv {p^{k_p-1}z}<m_p+k_p+j_p=\vv {p^{k_p}z}.
$$
Then $G$ is SH iff there is an $M\in \N$ such that $e_p\leq M\min\{j_p, k_p\}$ for all primes $p$.
\end{proposition}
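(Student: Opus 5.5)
The plan is to reduce everything to the action of an endomorphism on the cyclic groups $T_p$. This works because $G$ has rank $1$ and each $T_p$ is a finite summand of $G$.

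\textbf{Step 1: the projections $\pi_p$.} Fix a prime $p$ with $e_p\ne 0$ and write $G=T_p\oplus H_p$, with projection $\pi_p:G\to T_p$. The first step is a local computation. Using the tightness of $Z=\langle z\rangle$ and the shape of the $p$-height sequence of $z$ (one jump of size $j_p$, located after $k_p$ steps), I will show that $\pi_p(z)$ has order exactly $p^{\min\{j_p,k_p\}}$ in $T_p\cong\Z(p^{e_p})$. The inequality $e_p\ge\min\{j_p,k_p\}$ should also come out of this. To prove it, I would pick $w\in G$ with $p^{m_p+k_p+j_p}w=p^{k_p}z$ and compare $z$ with $p^{m_p+j_p}w$, or a suitable variant, to produce the torsion generator explicitly. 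This is the step I expect to be the main obstacle, since it is where the specific height data enters.

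\textbf{Step 2: how an endomorphism acts on each $T_p$.} Let $\phi$ be an endomorphism. Since $G/T$ has rank $1$, after replacing $z$ by a multiple we may write $\phi(z)=sz+t$ with $s\in\Z$ and $t\in T$. On each $T_p$, $\phi$ is multiplication by some integer $c_p$. Applying $\pi_p$ and using that $\phi$ commutes with the $p$-adic structure gives
\[
c_p\pi_p(z)=s\pi_p(z)+t_p ,
\]
where $t_p$ is the $p$-component of $t$. There are three cases.

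\begin{enumerate}
\item If $s\ne0$, then for the cofinitely many $p$ with $p\nmid s$ and $t_p=0$ we get $c_p\equiv s$ modulo $p^{\min\{j_p,k_p\}}$. Hence $c_p$ is a unit and $\phi$ is an automorphism of $T_p$. The finitely many remaining primes, together with (1.A) on $G/T$, are handled by Lemma~\ref{SH-exponent} and (1.B), so $\phi$ has an SH-exponent.
\item If $s=0$, then $\phi(G)\le T$ and $t$ has only finitely many nonzero components. For almost all $p$ we get $c_p\pi_p(z)=0$, so $v_p(c_p)\ge\min\{j_p,k_p\}$.
\item In case (2) with $v_p(c_p)\ge1$, the kernels of $\phi^n$ on $T_p$ are $T_p[p^{n v_p(c_p)}]$. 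They stabilize once $n\,v_p(c_p)\ge e_p$. Combined with Lemma~\ref{SH-exponent} (via $\phi(G)\le T$), an SH-exponent for $\phi$ is therefore governed by $\sup_p\lceil e_p/v_p(c_p)\rceil$ over the relevant primes, plus a finite correction.
\end{enumerate}

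\textbf{Step 3: sufficiency.} If $e_p\le M\min\{j_p,k_p\}$ for all $p$, then Step 2 gives, for every $\phi$, the bound $e_p/v_p(c_p)\le M$ at almost all primes where $c_p$ is not a unit. So a finite SH-exponent exists.

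\textbf{Step 4: necessity.} Conversely, suppose no such $M$ exists. Choose infinitely many primes $p$ with $e_p>n_p\min\{j_p,k_p\}$, where $n_p\to\infty$. Define $\phi$ on $T_p$ as multiplication by $p^{\min\{j_p,k_p\}}$ at these primes and as $0$ at all other primes. I will extend this to $G$ by $\phi=\sum_p p^{\min\{j_p,k_p\}}\pi_p$ restricted appropriately. This sum is a well-defined map $G\to T$, because $p^{\min\{j_p,k_p\}}\pi_p(z)=0$ by Step 1, and every element of $G$ is a multiple of $z$ modulo $T$ up to finitely many $p$-components. For this $\phi$ the kernels $\ker\phi^n\cap T_p$ keep growing at primes with $n_p>n$, so the chain $\ker\phi^n$ never stabilizes and $G$ is not SH.
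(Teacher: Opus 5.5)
There is a genuine gap. It sits in the local computation you flagged, and it carries over into Steps~2 and~4.

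\textbf{Steps 1 and 2.} Step~1 is false whenever $j_p<k_p$. For \emph{every} decomposition $G=T_p\oplus H_p$, $\pi_p(z)$ has order exactly $p^{k_p}$, not $p^{\min\{j_p,k_p\}}$. The reason is that $H_p$ has no $p$-torsion, so each of its elements has a gapless $p$-height sequence (heights in a summand agree with heights in $G$). By contrast, $p^{k_p-1}z$ has height sequence $(m_p+k_p-1,\,m_p+k_p+j_p,\dots)$, which has a gap, so $p^{k_p-1}\pi_p(z)\ne 0$. Concretely, set $t=x_p-y_p^{e_p}$ and $y=y_p^{e_p+j_p}$. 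Then $G_{(p)}=\langle t\rangle\oplus\Z_{(p)}y$ and $z=p^{m_p}t+p^{m_p+j_p}y$.

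The identity $c_p\pi_p(z)=s\pi_p(z)+t_p$ in Step~2 also fails. It tacitly assumes $\pi_p\phi=\phi\pi_p$, i.e.\ $\phi(H_p)\le H_p$, but the off-diagonal part $\pi_p\phi|_{H_p}\in\mathrm{Hom}(H_p,T_p)$ can be nonzero. For a counterexample, take the paper's map $\phi(x_p)=0$, $\phi(y)=y_p^{e_p}-x_p$ at one prime with $j_p<k_p$, and zero elsewhere. It has $\phi(z)=0$, so $s=0$ and $t_p=0$, yet $c_p=p^{j_p}$ and $c_p\pi_p(z)\ne 0$.

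The correct relation comes from writing $\phi(t)=c_pt$ and $\phi(y)=sy+d_pt$ in $G_{(p)}$. The $p$-component of $\phi(z)-sz$ is then $p^{m_p}(c_p+p^{j_p}d_p-s)t$. So $t_p=0$ gives $c_p+p^{j_p}d_p\equiv s\pmod{p^{k_p}}$, and since $d_p$ is unconstrained, this is exactly $c_p\equiv s$ modulo $p^{\min\{j_p,k_p\}}$. The conclusions of Steps~2 and~3 are therefore true, but your derivation reaches them only because the two errors cancel.

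\textbf{Step 4.} Here the errors do not cancel. At a prime with $j_p<k_p$, your map $p^{j_p}\pi_p$ sends $z$ to $p^{j_p}\pi_p(z)\ne 0$. Nothing prevents infinitely many violating primes from being of this kind. For example, take $m_p=0$, $j_p=1$, $k_p=e_p$ with $e_p$ unbounded: every prime violating the bound has $j_p<k_p$. Then the sum defining $\phi(z)$ has infinitely many nonzero terms, and $\phi$ is not a map $G\to T$.

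What is missing is the off-diagonal correction. At such primes take $c_p=p^{j_p}$ and $d_p=-1$. This is the paper's Case~1 map $\phi(x_p)=0$, $\phi(y_p^{e_p+j_p})=y_p^{e_p}-x_p$, which kills $z$ but still acts as multiplication by $p^{j_p}$ on $T_p$. Your map $p^{k_p}\pi_p$ coincides with the paper's Case~2 map and is valid only when $k_p\le j_p$.

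For sufficiency, the paper avoids projections altogether. Suppose $\phi(G)\le T$ and $p$ lies outside the finite set $\mathcal F$ of primes at which $\phi(z)$ has a nonzero component. The paper then shows $\phi(G[p^{\ell_p}])=0$ directly: $G[p^{\ell_p}]$ is generated by $p^{k_p-j_p}z-y_p^{j_p}$ (respectively $z-y_p^{k_p}$), and after localizing at $p$, $\phi(y_p^{\ell_p})=p^{e_p}\phi(y_p^{e_p+\ell_p})=0$ because $\phi(G)\le T$.
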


\begin{proof} We begin our argument by giving an explicit description of what this $G$ must look like by giving it a specific simple presentation.

Let $\ell_p:=\min\{j_p, k_p\}$ for all primes $p$. In addition, let $\sigma_p= e_p+j_p+1$ when $j_p\in \N$, and if $j_p=\infty$, let $\sigma_p=\infty$. Let $x_p\in G$ satisfy $p^{m_p} x_p=z$, and let $\{y^i_p\}_{0<i<\sigma_p}\subseteq G$ be such that $$py^1_p= p^{k_p} z=p^{m_p+k_p} x_p$$ and $py^{i}_p=y^{i-1}_p$ whenever $1<i<\sigma_p$. Note that $e_p=m_p+k_p$ and $T_p=\langle x_p- y_p^{e_p}\rangle$.

Suppose $G$ is SH; we need to find an integer $M>0$ such that $e_p\leq M \ell_p$ for all primes. In doing so, we will define a homomorphism $\phi:G\to G$ such that $\phi(z)=0$, so that $\phi(G)\leq T$. Since $G$ is SH, there is an $M\in \N$ such that $\textrm{ker}(\phi^M)= \textrm{ker}(\phi^{M+1})$, and we will show that this $M$ must satisfy the indicated inequality. We construct $\phi$ by defining it one prime at a time, specifying each $\phi(x_p)$ and $\phi(y^i_p)$.

\medskip

We consider two possible cases:

\medskip \noindent\textit{Case 1} -- $j_p\leq k_p<\infty$, so that $\ell_p=j_p$: In this case we set
$$
         \phi(x_p)=0\ \ \mathrm{and}\ \ \phi(y^{e_p+j_p}_p) =y^{e_p}-x_p\in T_p\cong \Z(p^{e_p}).
$$
It follows that
\[
\phi(x_p-y^{e_p}_p)=\phi(x_p)-\phi(p^{j_p} y^{e_p+j_p})=p^{j_p} (x_p-y^{e_p}_p),
\] i.e., $\phi$ restricts to multiplication by $p^{j_p}$ on $T_p$. Since $T_p$ is cyclic, it follows that
$$
         0=    \phi^M(x_p-y^{e_p}_p)= p^{M\cdot j_p} (x_p-y^{e_p}_p).
             $$
This gives that $e_p\leq M\cdot j_p=M\cdot \ell_p$, as asserted.

\medskip \noindent\textit{Case 2} -- $k_p\leq j_p$, so that $\ell_p=k_p$: In this case, we set
%Consider the homomorphism $\phi:G\to G$ such that
$$
         \phi(x_p):=p^{k_p}(x_p-y^{e_p}_p)\in T_p\cong \Z(p^{e_p})\ \ \mathrm{and}\ \ \phi(y^{i}_p) =0\ \ \mathrm{for\ all}\ i>0.
$$
Note that
\[
\phi(z)=\phi(p^{m_p}x_p)=p^{m_p+k_p}(x_p-y^{e_p}_p)=p^{e_p}(x_p-y^{e_p}_p)=0,
\]
as required.

It now follows that $\phi(x_p-y^{e_p}_p)=p^{k_p} (x_p-y^{e_p}_p)$, i.e., $\phi$ restricts to multiplication by $p^{k_p}$ on $T_p$. Again, since $T_p$ is cyclic, it follows that
%we can find an $M$ such that
$$
            0= \phi^M(x_p-y^{e_p}_p)= p^{M\cdot k_p} (x_p-y^{e_p}_p).
$$
This forces that $e_p\leq M\cdot k_p=M\cdot \ell_p$, as claimed.

Conversely, suppose we are given such an integer $M$; we want to show $G$ is SH. To that goal, let $\phi:G\to G$ be an endomorphism with kernel $K$.

Suppose first that $K\cap Z=0$, i.e., $K\leq T$. It thus follows that there is an $N$ such that, for all primes $p>N$, if we consider the endomorphism of $p$-localizations $\phi_{(p)}:G_{(p)}\to G_{(p)}$, we can conclude that $\phi_{(p)}$ restricted to $Z_{(p)}$ is an automorphism of $Z_{(p)}$. It thereby follows that, for all $p>N$ and $k<\omega$, the map $\phi$ induces an automorphism $U_k^p(G)\to U_k^p(G)$. However, since each $T_p$ is finite, it follows that $\phi$ restricts to an automorphism of $T_p$. If $S=\oplus_{q<N} T_q$, which is finite, we can infer that $K\leq S$. Similar reasoning shows that, if $K^j$ is the kernel of $\phi^j$, then $K^j\leq S$. However, since $S$ is finite and $K^j\leq K^{j+1}$, it follow that for some $j<\omega$ we have $K^j=K^{j+1}$, as desired.

So, we may assume that $K\cap Z\ne 0$, i.e., $\phi(G)\leq T$ (it is here that we need to use our $M$).

We claim that it will suffice to show that, for almost all primes $p$, we have $T_p^M=0$: Suppose we have verified this. It then follows that $G^{M+1}\leq T^M$ must be finite. But this informs us for some $n\geq M+1$ that $\phi$ must be an isomorphism on $G^n$.

Now, to verify for almost all primes $p$ that $T_p^M=0$, since each $T_p$ is cyclic, it suffices to check that for almost all primes $p$ we must have
$$
        \phi(G[p^{\ell_p}])=0.
$$
In fact, let $\mathcal F$ be the (finite) set of primes such that $\phi(z)\in \oplus_{p\in \mathcal F} T_p$. We will inspect now the above condition whenever $p$ is not in $\mathcal F$. In fact, localizing at some $p\not\in \mathcal F$, we may assume that $G=G_{(p)}$ and $\phi(z)=0$.

\medskip

We again break into two distinguish cases:

\medskip

\noindent\textit{Case 1} -- $j_p\leq k_p<\infty$: It follows that $G[p^{\ell_p}]=G[p^{j_p}]$ is generated by
$p^{k_p-j_p}z-y_p^{j_p}$. We know that $\phi(z)=0$, so that $\phi(p^{k_p-j_p}z)=0$. In addition,
$y_p^{j_p}=p^{e_p} y_p^{e_p+j_p}$ which ensures that $\phi(y_p^{j_p})=0$, completing this case.

\medskip

\noindent\textit{Case 2} -- $k_p\leq j_p$: It follows that $G[p^{\ell_p}]=G[p^{k_p}]$ is generated by
$z-y_p^{k_p}$. We know that $\phi(z)=0$. Since $e_p+k_p\leq e_p+j_p$, we obtain
$y_p^{k_p}=p^{e_p} y_p^{e_p+k_p}$. Thus, we can deduce that $\phi(y_p^{k_p})=0$, completing this case, and hence the whole proof.
\end{proof}

The group of \cite[Example~2.15]{CDK}  mentioned in the above Example~\ref{ptothep} fits the pattern described in  Proposition~\ref{warfieldrankone}. To see that, let $p_1, p_2, p_3, \dots$ be the collection of all primes. In $P:= \prod_{n\in \N} \Z(p_n^n)$, we let $z$ be the vector whose $n$th coordinate is $1\in \Z(p_n^n)$. Define the group $G$ as that which satisfies $T\leq G\leq P$ with $G/\langle {z}\rangle$ being the torsion subgroup of $P/\langle {z}\rangle$. So, for all $n\in\N$, the $p$-height sequence of ${z}$ is given by
\[
(0, 1, 2, \cdots, n-1, \infty, \infty, \infty, \dots)
\]

In the notation of Proposition~\ref{warfieldrankone}, for each prime $p_n$ ($n\in \N$), we have $m_n=0$, $k_n=e_n=n$ and $j_n=\infty$. Therefore, $\ell_n=\min\{j_n, k_n\}=n$, so that if $M=1$, then $e_n=n\leq 1\cdot n=M\cdot \ell_n.$  So, again utilizing Proposition~\ref{warfieldrankone}, this $G$ is SH, but its torsion part is obviously not SH, as expected.

\bigskip

\noindent{\bf Declarations:} The authors declare {\bf no} any conflict of interests as well as {\bf no} data was used while preparing and writing the current manuscript.

\end{document}